\documentclass[11pt, reqno]{amsart}
\usepackage{ifthen}
\usepackage{latexsym}
\usepackage{graphicx}
\usepackage{amsmath}
\usepackage{amssymb}
\usepackage{amsfonts}
\usepackage{wasysym}
\usepackage{amsthm,amssymb,amscd}
\usepackage{epsfig}
\usepackage{epstopdf,fancyhdr}
\usepackage{extarrows}

\evensidemargin 4mm \oddsidemargin 3mm \topmargin 0ex \textwidth
145mm \textheight 225mm

\theoremstyle{plain}
\newtheorem{thm}{Theorem}[section]
\newtheorem{prop}[thm]{Proposition}
\newtheorem{lem}[thm]{Lemma}
\newtheorem{cor}[thm]{Corollary}

\theoremstyle{definition}
\newtheorem*{notation}{Notation}
\newtheorem{rem}[thm]{Remark}
\newtheorem{defn}{Definition}[section]
\newtheorem{eg}[thm]{Example}

\numberwithin{equation}{section}

\newcommand{\bthm}{\begin{thm}}
\newcommand{\ethm}{\end{thm}}
\newcommand{\bprop}{\begin{prop}}
\newcommand{\eprop}{\end{prop}}
\newcommand{\bcor}{\begin{cor}}
\newcommand{\ecor}{\end{cor}}
\newcommand{\blem}{\begin{lem}}
\newcommand{\elem}{\end{lem}}
\newcommand{\bca}{\begin{cases}}
\newcommand{\eca}{\end{cases}}
\newcommand{\brem}{\begin{rem}}
\newcommand{\erem}{\end{rem}}
\newcommand{\bpm}{\begin{pmatrix}}
\newcommand{\epm}{\end{pmatrix}}
\newcommand{\bdefn}{\begin{defn}}
\newcommand{\edefn}{\end{defn}}
\newcommand{\bsub}{\begin{subtitle}}
\newcommand{\esub}{\end{subtitle}}
\newcommand{\ben}{\begin{enumerate}}
\newcommand{\een}{\end{enumerate}}
\newcommand{\beg}{\begin{eg}}
\newcommand{\eeg}{\end{eg}}
\newcommand{\beq}{\begin{equation}}
\newcommand{\eeq}{\end{equation}}

\def\ms{\medskip}

\def\ni{\noindent}

\def\Re{{\rm Re\/}}

\def\diag{{\rm diag\/}}
\def\Span{{\rm Span\/}}

\def \a {\alpha}
\def \b {\beta}

\def \e {\epsilon}

\def \l {\lambda}
\def \L {\Lambda}

\def \s {\sigma}

\def \ud{\mathrm{d}}

\def\R{\mathbb{R} }
\def\C{\mathbb{C}}

\def\Z{\mathbb{Z}}

\def\fg{\mathfrak{g}}
\def \Tr{\mathrm{Tr}}

\def\cg{{\mathcal {G}}}

\DeclareMathOperator{\GL}{\mathrm{GL}}
\DeclareMathOperator{\SL}{\mathrm{SL}}

\DeclareMathOperator{\SO}{\mathrm{SO}}

\DeclareMathOperator{\sli}{\mathrm{sl}}

\begin{document}

\title{DRESSING ACTIONS ON PROPER DEFINITE AFFINE SPHERES}

\author{Zhicheng Lin, Gang Wang}
\address{Wuhan Institute of Physics and Mathematics, Chinese Academy of Sciences,
Wuhan, Hubei 430071, China}
\email{flyriverms@qq.com}

\address{Shandong University,
Jinan, Shandong 250100, China}
\email{wg110789@sina.com}

\author{Erxiao Wang}
\address{Department of Mathematics, Hong Kong University of Science and
Technology, Clear Water Bay, Kowloon, Hong Kong}
\email{maexwang@ust.hk}

\date{}
\begin{abstract}
 We will first clarify the loop group formulations for both hyperbolic
 and elliptic definite affine spheres in $\R^3$. Then we classify the rational  elements with 3 poles or 6 poles in a real twisted loop group, and compute dressing actions of them on such surfaces. Some new examples with pictures will be produced at last. 
\end{abstract}

\maketitle


\ms \hskip 3in \today


\section{Introduction}
All intuitive geometrical notions about position, size and shape developed in school geometry are invariant under rigid motions comprised of  translations, rotations and reflections. Following Klein's famous Erlangen Program (1872),  affine geometry studies the constructs invariant under the affine group $\mathrm{A}(n)=\GL(n)\ltimes \R^n$ comprised of nondegenerate linear maps and translations. While two fundamental measurements angle and distance in Euclidean geometry are no longer invariant under affine motions, notions for some relative positions such as parallels and midpoints (more generally center of mass) still make sense. As the standard directional derivative operator $D$ is preserved by $\mathrm{A}(n)$, differential geometry of curves and surfaces can also be generalized. Similarly, the symmetry group for equiaffine geometry is $\mathrm{SA}(n)=\SL(n)\ltimes \R^n$ preserving in addition the standard volume form on $\R^n$.

In 1907 Gheorghe Tzitz\'{e}ica discovered  a particular
class of hyperbolic surfaces in $\R^3$ whose Gauss curvature at any point $p$ is proportional
to the fourth power of the distance from a fixed point to the tangent plane at $p$.
He used the  structure equation
\begin{equation} \label{eqtzi}
\omega_{xy}=e^\omega-e^{-2\omega}
\end{equation}
to describe the (indefenite) affine spheres. The reader may refer to \cite{Fox12} for a concise survey on affine spheres and to \cite{Lof10} for an extensive survey on their relations to real Monge-Amp\`{e}re equations, projective structures on manifolds, and Calabi-Yau manifolds. Here we would like to emphasize the Monge-Ampr\`{e}re equation (see Calabi \cite{Eugenio Calabi.{1972}}). The graph of a locally strictly convex function $f$ is a mean curvature H affine sphere centered at the origin or infinity if and only if the Legendre transform $u$ of
$f$ solves:
\begin{equation}\label{eqMA}
\det \left(\frac{\partial^2 u}{\partial y_i \partial y_j}\right) = \left\{ \begin{array}{ll}
(H u )^{-n-2}, & \textrm{if $H \neq 0$,}\\
1, & \textrm{if $H=0$.}
\end{array} \right.
\end{equation}
Cheng \& Yau \cite{CheYau86} showed that on a bounded convex domain there is for $H < 0$ a unique negative convex solution of \eqref{eqMA} extending continuously to be $0$ on the boundary. This eventually leads to a beautiful geometric picture of complete hyperbolic ($H < 0$) affine spheres (conjectured by Calabi): such a hypersurface is always asymptotic to a convex cone with its vertex at the center and the interior of any regular convex cone is a disjoint union of complete hyperbolic affine spheres asymptotic to it with all negative mean curvature values and with centers at the vertex. Even in $\mathbb{R}^3$, it is highly nontrivial to see the above equation is equivalent to the usual surface structure equations (see Simon \& Wang \cite{Sim93}):
\begin{equation} \label{eqdas}
  \begin{cases}
     \psi_{z\bar{z}} + \frac{H}{2} e^{\psi} + |U|^2  e^{-2\psi} = 0, \\
     U_{\bar{z}}  = 0 ,
  \end{cases}
\end{equation}
where $e^\psi |\mathrm{d}z|^2$ is the affine metric and $U \mathrm{d} z^3$ is equivalent to the affine cubic form. One may also fix a holomorphic cubic form $\mathbf{U}$ (locally $\mathbf{U} = U \mathrm{d} z^3$)
and a conformal metric $g$ on a Riemann surface (such that the affine metric $e^\psi |\mathrm{d}z|^2 = e^\phi g$). Then
a coordinate-free version of \eqref{eqdas} is:
\begin{equation} \label{eqglobal}
 \Delta\phi + 4\|\mathbf{U}\|^2 e^{-2\phi} + 2H e^\phi - 2\kappa = 0,
\end{equation}
where $\Delta$ is the Laplace operator of $g$, $\| \cdot \|$ is the induced norm on cubic differential, and $\kappa$ is the Gauss curvature.

The classical Tzitz\'{e}ica equation \eqref{eqtzi} was rediscovered in many mathematical and physical contexts afterwards.(see, e.g., \cite{Dod77}, \cite{Dun02}, \cite{Gaf84}). In recent years, techniques from soliton theory have been applied to this equation by: Rogers \& Schief in the context of gas dynamics (\cite{Rog94}), Kaptsov \& Shan'ko on multi-soliton formulas (\cite{Kap97}), Dorfmeister \& Eitner on Weierstrass type representation (\cite{DoEi01}), Bobenko \& Schief on its discretizations (\cite{Bob99}, \cite{Bob991}), and Wang (\cite{Wa06}) on dressing actions following Terng \& Uhlenbeck (\cite{Ter00}) approach.

The equation \eqref{eqdas} for nonzero $U$ is an elliptic version of the classical Tzitz\'eica equation, or the Bullough-Dodd-Jiber-Shabat equation, or the affine $\mathfrak{a}_2^{(2)}$-Toda field equation, or the first $\SL(3,\mathbb{R})/\SO(2,\mathbb{R})$ elliptic system (see Terng \cite{Te08}).  Dunajski \& Plansangkate \cite{DuPl09} have given gauge invariant characterization of \eqref{eqdas} in terms of $\mathrm{SU}(2,1)$ Hitchin equations or reduction of Anti-Self-Dual-Yang-Mills equations, and linked the local equation around a  double pole of $U$ to Painlev\'e $III$. Zhou \cite{Zhou09} have given Darboux transformations for two dimensional elliptic affine Toda equations. Hildebrand \cite{Roland Hildebrand.{2013}} have given analytic formulas for  complete hyperbolic affine spheres asymptotic to semi-homogenous convex cones.

In the background of mirror symmetry, Loftin, Yau \& Zaslow \cite{LoYauZa05} constructed some global hyperbolic/elliptic affine sphere immersions from Riemann sphere minus 3 points, also called "trinoids". This has motivated our extensive studies of such surfaces from integrable system or soliton theory. In this paper we compute the  dressing actions and soliton examples. The Permutability  Theorem and group structure of dressing actions will be presented  in a subsequent  paper \cite{Wang lin Wang}. Their Weierstrass or DPW representation has been studied in \cite{Dor Wang01}, using an Iwasawa decomposition of certain twisted loop group. The equivariant solutions have been constructed in \cite{Dor Wang02}. The conformal parametrization of Hildebrand's complete affine spheres will be presented in \cite{lin Wang}.

The rest of the paper is organized as follows. In section 2, we will recall the fundamental notions and theorems of equiaffine differential geometry, and the definitions of affine spheres.  We specialize to two dimensional proper definite affine spheres in section 3 and give the loop group descriptions of them.  We then classify the simple rational elements in certain twisted  loop group in section 4 and compute the dressing actions of them on such surfaces in section 5. Some examples will be presented in the last section.

\section{Fundamental theorem of affine differential geometry  and affine spheres }
Classical affine differential geometry studies the properties of an immersed hypersurface $r: M^n \rightarrow \R^{n+1}$ invariant under the equiaffine transformations $r \to Ar+b$, where $A \in \SL_{n+1}(\R)$ and $b \in \R^{n+1}$. Here $\R^{n+1}$ is viewed as an affine space equipped with standard equiaffine structure: the canonical connection $D$ and the parallel volume form given by the standard determinant. The following form in local coordinates $(u_1,\cdots,u_n)$ is naturally an equiaffine invariant:
\begin{equation}\label{eqfun}
\Lambda = \sum_{i,j} \det \left( \frac{\partial^2 r}{\partial u_i \partial u_j}, \frac{\partial r}{\partial u_1}, \cdots, \frac{\partial r}{\partial u_n} \right) (\ud u_i \ud u_j)\otimes (\ud u_1 \wedge \cdots \wedge \ud u_n)
\end{equation}
which is independent of choice of local coordinates and thus defines a global quadratic form with values in the line bundle of top forms on $M$. Denote the determinant coefficients in \eqref{eqfun} by $\Lambda_{ij}$ and their determinant by $\det\Lambda$. Assuming $\det\Lambda \neq 0$ everywhere, a pseudo-Riemannian metric $g$ is induced uniquely up to sign (depending on the orientation of $M$) by the equation
\[
\Lambda = g \otimes \mathrm{vol}(g) \qquad \textrm{  ( i.e. volume form of $g$ )} ,
\]
or defined explicitly by
\[
g= \pm |\det\Lambda|^{ -\frac{1}{n+2} } \  \sum_{i,j} \  \Lambda_{ij} \ud u_i \ud u_j ,
\]
called the \textbf{affine metric}.  The hypersurface is said to be {\bf definite\/} or {\bf indefinite\/} if this metric $g$ is so. A smooth hypersurface is definite if and only if it is locally strictly convex. Although the Euclidean angle is not invariant under affine transformations, there exists an invariant transversal vector field $\xi$ along $r(M)$ defined by $\triangle^g r \, / \, n $, called the {\bf affine normal}. Here $\triangle^g$ is the Laplace-Beltrami operator of $g$. Another way to find the affine normal up to sign is by modifying the scale and direction of any transversal vector field (such as the Euclidean normal) to meet two natural characterizing conditions:
\begin{itemize}
\item[(i)] $D_X \xi$ or $\ud \xi (X) $ is tangent to the hypersurface for any $ X \in T_p M$,
\item[(ii)] $\xi$ and $g$ induce the same volume measure on $M$:
$$ \det \left( r_{\ast} X_1, \cdots, r_{\ast} X_n, ~\xi \right)^2= | \det ( g(X_i, X_j) ) | $$
for any $ X_i \in T_p M$.
\end{itemize}
The formula of Gauss gives the following decomposition into tangential and transverse components:
\begin{equation}\label{eqde}
    D_X r_{\ast}Y = r_{\ast}(\nabla_X Y) + g(X,Y)\xi ~,
\end{equation}
which induces a torsion-free affine connection $\nabla$ on $M$, called \textbf{Blaschke connection}. Let $\hat{C}$ denote the difference tensor between the induced Blaschke connection $\nabla$ and $g$'s Levi-Civita connection $\nabla^g$. The affine cubic form
measures the difference between the induced Blaschke connection $\nabla$ and $g$'s Levi-Civita connection
$\nabla^g$:
\begin{equation}\label{eqcu}
    C(X,Y,Z):= g(\nabla_X Y-\nabla^g_X Y,Z).
\end{equation}
It is actually symmetric in all 3 arguments and is a 3rd order invariant. The \textbf{Pick invariant} $J$ is simply $\|C\|_g^2 / (n^2-n)$.

Similar to the Euclidean case, the \textbf{affine shape operator} $S$ defined by the formula of Weingarten:
\[
 D_X \xi = -r_{\ast}(S (X)) ,
 \]
is again self-adjoint with respect to $g$.
The {\bf affine mean curvature\/} $H$ and the {\bf affine Gauss curvature\/} $K$ are defined as $ H = \Tr S \, / \, n , ~  K = \det S $.

Note that the affine normal is also a 3rd order invariant. Hence $S$ is a 4th order invariant, which can actually be computed from $g$ and $C$:
\begin{eqnarray}\label{eqgcs}
  H & =&  R_g - J , \qquad \textrm{ where } R_g = \textrm{ scalar curvature of } g. \\
  \nonumber  g(S_0(X),Y) & =&  -\frac{2}{n} \Tr \{ Z \mapsto (\nabla^g_Z \hat{C})(X,Y) \} , \qquad \textrm{ where } S_0 := S - H \cdot \mathrm{Id}.
\end{eqnarray}

The readers may refer to several textbooks \cite{Bla23, LSZ93, NomSas94} for more details of the above basic notions.

\begin{thm}[Dillen-Nomizu-Vrancken type fundamental theorem \cite{DNV90}]\label{fun}
 Given a nondegenerate symmetric $2$-form $g$ and a symmetric $3$-form $C$ on simply connected $M$ satisfying two compatibility conditions:  the apolarity condition $\Tr_g C = 0 $, and the conjugate connection $\overline{\nabla} :=\nabla^g - \hat{C}$ is projectively flat,  there exists a global immersion into $\R^{n+1}$ unique up to equiaffine motions such that $(g,C)$ are the induced affine metric and cubic form respectively.
\end{thm}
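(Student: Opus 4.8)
The plan is to prove existence and uniqueness by the standard moving-frame/integrability argument, with the Dillen--Nomizu--Vrancken hypotheses serving precisely to kill all curvature obstructions. First I would recover from the data $(g,C)$ every remaining equiaffine invariant of a prospective immersion: the Levi-Civita connection $\nabla^g$ of $g$, the difference tensor $\hat C$ determined by $g(\hat C(X,Y),Z)=C(X,Y,Z)$, the Blaschke connection $\nabla=\nabla^g+\hat C$, and the shape operator $S$ via equation (\ref{eqgcs}). Thus a candidate immersion is completely determined by $g$ and $C$, and the only genuine question is integrability.

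Second, I would assemble these into a single connection. Choosing locally a frame $e_1,\dots,e_n$ on $M$ and treating $r_* e_i$ and the affine normal $\xi$ as unknowns, the Gauss formula (\ref{eqde}) together with the Weingarten formula $D_X\xi=-r_*(SX)$ prescribe the derivatives of the moving frame $(r, r_*e_1,\dots,r_*e_n,\xi)$. Packaging the coframe, the connection forms of $\nabla$, the coupling $g(\cdot,\cdot)\,\xi$, and the shape operator $S$ into one $(\mathfrak{sl}(n+1,\R)\ltimes\R^{n+1})$-valued $1$-form $\alpha$, the apolarity condition $\Tr_g C=0$ is exactly what forces the trace part to vanish, so that the frame lives in $\SL(n+1,\R)\ltimes\R^{n+1}$ and $\xi$ is genuinely the affine normal. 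Existence of the immersion is then equivalent to flatness $d\alpha+\tfrac12[\alpha\wedge\alpha]=0$ of this connection.

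Third --- and this is the crux --- I would relate flatness of $\alpha$ to the two hypotheses. The Maurer--Cartan equation for $\alpha$ is equivalent to the classical affine Gauss--Codazzi--Ricci system: the Gauss equation $R(X,Y)Z=g(Y,Z)SX-g(X,Z)SY$, the Codazzi equation $(\nabla_XS)Y=(\nabla_YS)X$, and the self-adjointness $g(SX,Y)=g(X,SY)$ (the Codazzi equation for $g$ being automatic from the total symmetry of $C$). The content of the Dillen--Nomizu--Vrancken reformulation is that, once apolarity holds, this whole system collapses to the single statement that $\overline{\nabla}=\nabla^g-\hat C$ is projectively flat. Conceptually this is because $\overline{\nabla}$ is the connection induced on the conormal (dual) map, whose projective flatness is exactly the integrability of that dual immersion; concretely one expands the projective Weyl curvature of $\overline{\nabla}$, uses that $\overline{\nabla}$ is the $g$-conjugate of $\nabla$ together with the symmetry of $C$ to separate trace and trace-free parts, and matches these against the Gauss and Codazzi equations while simultaneously extracting the self-adjointness of $S$. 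Establishing this equivalence cleanly is the main obstacle, and the explicit formula (\ref{eqgcs}) for $S$ in terms of $g$ and $C$ is precisely what makes the identities close up.

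Finally, once flatness is in hand, simple connectivity of $M$ lets me integrate $\alpha$: a flat $\mathfrak{g}$-valued Maurer--Cartan form on a simply connected manifold integrates to a smooth map $F\colon M\to \SL(n+1,\R)\ltimes\R^{n+1}$, unique up to left multiplication by a constant element of the group. Reading off the $\R^{n+1}$-component of $F$ yields the immersion $r$, and the left-translation ambiguity is exactly the freedom of equiaffine motions $r\mapsto Ar+b$. A short final check, unwinding the construction of $\alpha$, confirms that the affine metric and cubic form induced by this $r$ coincide with the prescribed $g$ and $C$, giving both existence and uniqueness.
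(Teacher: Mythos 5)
The paper does not actually prove this theorem: it is quoted from Dillen--Nomizu--Vrancken \cite{DNV90} as background, so there is no in-paper argument to compare against. Your outline is precisely the standard proof of that reference --- recover $\nabla$, $\hat{C}$ and $S$ from $(g,C)$, use apolarity to normalize the transversal field to the affine normal, show the affine Gauss--Codazzi system is equivalent to projective flatness of the conjugate connection $\overline{\nabla}$ (the connection induced by the conormal map), and integrate the resulting flat $\mathfrak{sl}(n+1,\R)\ltimes\R^{n+1}$-valued form over simply connected $M$ --- and it is correct as a strategy, with uniqueness up to equiaffine motions coming from the constant of integration. The one point to tighten is your appeal to the ``projective Weyl curvature'' in the third step: that tensor vanishes identically when $n=2$, the case the rest of the paper actually uses, so there projective flatness must instead be expressed as the Codazzi condition on the normalized Ricci tensor of $\overline{\nabla}$, which is exactly what encodes the Codazzi equation for $S$.
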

\rem
When we scale the immersion $f$ to $ \rho f$ for some positive constant $\rho$, the above invariants changes by:
\begin{eqnarray}\label{scale}
\nonumber \nabla &\rightarrow& \nabla, \qquad g \rightarrow \rho^{\frac{2(n+1)}{n+2}} g,  \qquad \xi \rightarrow \rho^{\frac{-n}{n+2}} \xi,  \qquad C \rightarrow \rho^{\frac{2(n+1)}{n+2}} C, \\
S &\rightarrow& \rho^{\frac{-2(n+1)}{n+2}} S \qquad H \rightarrow \rho^{\frac{-2(n+1)}{n+2}} H, \qquad K \rightarrow \rho^{\frac{-2n(n+1)}{n+2}} K .
\end{eqnarray}

Historically the first important class of affine surfaces is the affine spheres
studied by Tzitz\'{e}ica in a series of papers from 1908 to 1910. A {\bf proper
affine sphere} is a surface whose affine normal lines all meet in a point (the
center), such as all ellipsoids and hyperboloids. An {\bf improper affine sphere}
is a surface whose affine normals are all parallel, such as all paraboloids and
all ruled surfaces of the form $x_{3}=x_{1}x_{2}+f(x_{1})$. An equivalent definition is
$S=H\cdot$ Id, which implies from the compatibility conditions that the affine
mean curvature $H$ must be constant. When $S\equiv 0, ~ \xi$ is a constant vector and
it is improper. So improper affine spheres are special affine maximal surfaces $(H=0)$ and have been integrated explicitly  for dimension 2 in \cite{Bla23}. When $S=H\cdot$ Id $\neq 0$, it
is proper and $\xi=-H(x-x_{0})$ with some $x_{0}$ being the center. For simplicity,
we will always make $x_{0}=0$ by translating the surface. In the indefinite case,
we can scale the surface and change the sign of $\xi$ if necessary to normalize $H=-1$. However, in the definite case we have a preferred choice of the signs of $g$ and $\xi$ by requiring $g$ to be positive definite, or equivalently by choosing $\xi$ to point to the inside of the convex surface. So we need to distinguish two
cases according to the sign of $H$:\par
(1) the {\bf elliptic affine spheres} whose center is inside the convex surface $(H>0)$, such as all ellipsoids;\par
(2) the {\bf hyperbolic affine spheres} whose center is outside the convex surface $(H<0)$, such as all hyperboloids of two sheets and $x_1x_2x_3 = 1$. \par
Improper definite affine spheres $(H=0)$, are naturally called {\bf parabolic
affine spheres}, such as all elliptic paraboloids. While quadrics are the only global examples of elliptic and parabolic affine spheres (by Calabi, Cheng-Yau, J\"{o}rgens, Pogorelov), there are many global hyperbolic affine spheres asymptotic to each sharp (or regular) convex cone (by Cheng-Yau). One of the main goals of this paper is to construct families of local examples of both elliptic and hyperbolic affine spheres in $\R^3$ using integrable system techniques.

\section{Loop group description of definite affine spheres}
\ms
From now on we consider only the surface case in $\R^3$ where its affine metric $g$ is positive definite: this means that  $r(M)$ is locally strongly convex and oriented so that $\Lambda$ is positive valued. In particular its Euclidean Gauss curvature is positive and the affine normal $\xi$ is chosen to point to the concave side of the surface. This essentially induces a unique Riemann surface (or complex) structure on $M$ in whose coordinates $g=e^\psi|\ud z|^2$ and the orientation given by $\mathrm{i} \ud z \wedge \ud \bar{z}$ is consistent with the orientation induced by $\xi$. Note that $-r$ would have the same fundamental invariants as $r$ but reverse the orientation. So it is not considered to be equi-affinely equivalent to $r$. Indeed $3\times 3$ negative identity matrix has determinant $-1$ and is not in $\SL (3,\C)$. Alternatively we are studying ``\textbf{affine-conformal}'' immersions of any Riemann surface $M$ into $\R^3$:
\begin{equation}\label{eqconf}
\mid r_{z} \quad r_{\bar{z}} \quad r_{zz} \mid \  = \  0  \quad \qquad \textrm{ affine-conformal condition. }
\end{equation}
Then the following two determinants completely determine the fundamental invariants:
\[
\mid r_{z} \quad r_{\bar{z}} \quad r_{z \bar{z} } \mid \  = \  \frac{\mathrm{i}}{4} e^{2\psi}, \qquad
\mid r_{z} \quad r_{zz} \quad r_{zzz } \mid \  = \  \frac{\mathrm{i}}{4} U^2,
\]
with $g=e^\psi |\ud z|^2 $ and $C=U\ud z^3+\bar{U}\ud \bar{z}^3 $. This can also be illustrated by computing the evolution equations for the positively oriented frame $ \tilde{F} = ( r_z, r_{\bar{z}}, \xi = 2 e^{-\psi} r_{z \bar{z}} ) $:
\begin{equation}\label{eqf1}
\tilde{F}^{-1} \ud \tilde{F} = \left(
                                 \begin{array}{ccc}
                                   \psi_z \ud z & \bar{U} e^{-\psi} \ud \bar{z}  & -H \ud z + 2e^{-2\psi} \bar{U}_z \ud \bar{z} \\
                                   U e^{-\psi} \ud z & \psi_{\bar{z}} \ud \bar{z} & -H \ud \bar{z} + 2e^{-2\psi} U_{\bar{z}} \ud z \\
                                   \frac{1}{2}e^\psi \ud \bar{z} & \frac{1}{2}e^\psi \ud z & 0 \\
                                 \end{array}
                               \right).
\end{equation}
The compatibility condition ($\tilde{F}_{z\bar{z}} = \tilde{F}_{\bar{z}z}$) or the flatness of $\tilde{F}^{-1} \ud \tilde{F}$ determines every entry in the above matrix one-form satisfying two structure equations (first derived by Radon):
\begin{eqnarray}
  H &=& -2 e^{-\psi} \psi_{z\bar{z}} - 2 |U|^2 e^{-3\psi}  \qquad\qquad \textrm{ Gauss equation} \label{com1}\\
  H_{\bar{z}} &=& 2 e^{-3\psi} \bar{U} U_{\bar{z}} - 2 e^{-\psi} (e^{-\psi}\bar{U}_z)_z .  \quad\quad \textrm{ Codazzi equation for } S\label{com2}
\end{eqnarray}
Conversely, given a positive definite metric $g$ and fully symmetric cubic form $C$ as above satisfying \eqref{com1} and \eqref{com2}, there exits a definite affine surface unique up to equiaffine motions such that $(g, C)$ are the induced affine metric and cubic form respectively. This is  \emph{the fundamental theorem for definite affine surface}. Here $H$ is indeed the affine mean curvature function since the affine shape operator $S$ takes the following matrix form in terms of the basis $ ( \partial_z, \partial_{ \bar{z} } )$:
\[
S = \left(
      \begin{array}{cc}
        H & -2e^{-2\psi} \bar{U}_z  \\
        -2e^{-2\psi} U_{\bar{z}} & H  \\
      \end{array}
    \right) .
\]

In summary we obtain the governing equations for definite affine spheres in $\R^3$  (see also Simon-Wang \cite{Sim93}):
\begin{equation} \label{eqgc}
  \left\{ \begin{array}{rl}
     \psi_{z\bar{z}} + \frac{H}{2} e^{\psi} + |U|^2  e^{-2\psi} &= 0, \\
     U_{\bar{z}}  &= 0 ,
  \end{array} \right.
\end{equation}
whose solution determines a unique definite affine sphere (up to equi-affine motions) with constant affine mean curvature $H$ and affine metric $e^\psi|\ud z|^2$, by integrating the frame equation:
\begin{equation} \label{eqfr}
\tilde{\alpha}:=\tilde{F}^{-1} \ud \tilde{F} = \left(
                                 \begin{array}{ccc}
                                   \psi_z \ud z & \bar{U} e^{-\psi} \ud \bar{z}  & -H \ud z  \\
                                   U e^{-\psi} \ud z & \psi_{\bar{z}} \ud \bar{z} & -H \ud \bar{z}  \\
                                   \frac{1}{2}e^\psi \ud \bar{z} & \frac{1}{2}e^\psi \ud z & 0 \\
                                 \end{array}
                               \right).
\end{equation}

It is now clear that $U \ud z^3$ is a globally defined holomorphic cubic differential (i.e. in $H^0(M,K^3)$ where $K$ is the canonical bundle of $M$). Recall Pick's Theorem: $ C\equiv 0 $ if and only if $r(M)$ is part of a quadric surface. So $U$ is nonzero except for the quadrics. Away from its isolated zeroes one could make a holomorphic coordinate change to normalize $U$ to a nonzero constant but we will not do that now. These zeroes of $U$ will be called ``planar'' points of the affine surface.

We would like to emphasize that the immersion is analytic for any definite affine sphere, since the defining equation is elliptic (cf. \cite{Bla23} \S 76 ).

 The following observation is crucial for the integrability of definite affine spheres: the system \eqref{eqgc} is invariant under $U \rightarrow e^{i\theta} U$ with any constant $\theta \in \R/2\pi\Z $. Thus an $S^1$-family of definite affine spheres $r^\theta$ can be associated to any given one, with the same affine metric but different affine cubic forms
$C=e^{i\theta}U \ud z^3 + e^{-i\theta}\bar{U}\ud \bar{z}^3$.

To further reveal the hidden symmetry, we will scale the surface to normalize $H=\pm 2$ using \eqref{scale} for elliptic or hyperbolic case, also replace $U$ by $e^{i\theta}U$ in the frame equation \eqref{eqfr}, and then use $\diag (2 \sqrt{\mp 1} \lambda^{-1} e^{-\psi/2},2 \sqrt{\mp 1} \lambda e^{-\psi/2},1)$ to gauge the Maurer-Cartan form $\tilde{\alpha}$ to:
\begin{equation} \label{eqla}
\alpha_{\lambda} =
\begin{pmatrix}
\frac{1}{2}(\psi_z \ud z -\psi_{\bar{z}} \ud \bar{z}) & \lambda^{-1}\bar{U}e^{-\psi} \ud \bar{z} &\sqrt{\mp 1} \lambda\ e^{\psi/2} \ud z \cr
\lambda U e^{-\psi} \ud z  & \frac{1}{2}(\psi_{\bar{z}} \ud \bar{z} - \psi_z \ud z ) &\sqrt{\mp 1} \lambda^{-1} e^{\psi/2} \ud \bar{z} \cr
\sqrt{\mp 1} \lambda^{-1} e^{\psi/2} \ud \bar{z} & \sqrt{\mp 1} \lambda e^{\psi/2} \ud z & 0
\end{pmatrix}
\end{equation}
Although $\a_\l$ has real geometric meaning only for $|\l|=1$, it is actually flat for all $\l \in \mathbb{C}\setminus\{0\}$. Furthermore it takes value in certain twisted loop algebra of $\sli (3,\C)$, i.e. satisfying two reality conditions:
\begin{equation}\label{eqre}
 \tau(\alpha_{1/ \bar \lambda})=\alpha_\lambda, \quad \  \sigma(\alpha_{e^{-2\pi{\rm i}/ 6} \lambda})= \alpha_{ \lambda},
\end{equation}
where $\tau(X)= T \bar{X} T^{-1}$ and $\sigma(X)= -P  X^t P^{-1}$ for
\beq\label{3i}
T=\bpm 0 & 1 & 0 \\ 1 & 0 & 0 \\ 0 & 0& -H/2 \epm, \quad P=\bpm 0 & \e^4 & 0 \\ \e^2 & 0 & 0 \\ 0 & 0& 1 \epm, \quad \e=e^{\frac{\pi i}{3}}.
\eeq
We denote $\tau_1$ for $\tau$-reality condition in hyperbolic case, $\tau_2$ in elliptic case, and $T_1$ for T in  hyperbolic case, $T_{-1}$ in elliptic case, i.e.
\[
T_1=\bpm 0 & 1 & 0 \\ 1 & 0 & 0 \\ 0 & 0& 1\epm, \; T_{-1}=\bpm 0 & 1 & 0 \\ 1 & 0 & 0 \\ 0 & 0& -1\epm.
\]

Note that $\tau$ is a conjugate linear involution of $\sli (3,\C)$ whose fixed point set is isomorphic to $\sli(3,\R)$; and $\sigma$ is an order $6$ automorphism of $\sli (3,\C)$ giving the following eigenspace decomposition or $\Z_6$-gradation:

 $$ sl(3,\C)=\oplus_{j=0}^5 \cg_j, \quad [\cg_j,\cg_k] \subset \cg_{j+k}. $$
with
\begin{align*}
\cg_0 &=\left\{ \bpm s & 0 & 0 \\ 0 & -s & 0 \\ 0 & 0& 0 \epm \right\}, \quad
\cg_1 =\left\{ \bpm 0 & 0 & s_1 \\ s_2 & 0 & 0 \\ 0 & s_1 & 0 \epm \right\}, \\
\cg_2 &=\left\{ \bpm 0 & 0 & 0 \\ 0 & 0 & s \\ -s & 0& 0 \epm \right\}, \quad
\cg_3 =\left\{ \bpm s & 0 & 0 \\ 0 & s & 0 \\ 0 & 0& -2s \epm \right\}, \\
\cg_4 &=\left\{ \bpm 0 & 0 & s \\ 0 & 0 & 0 \\ 0 & -s& 0 \epm \right\}, \quad
\cg_5 =\left\{ \bpm 0 & s_1 & 0 \\ 0 & 0 & s_2 \\ s_2 & 0& 0 \epm \right\}.
\end{align*}

We verify that $\sigma\tau=\tau\sigma$ and they define a $6$-symmetric space ``$\SL(3, \mathbb{R})"/ \SO(2,\mathbb{R})$.

The induced automorphisms on $\SL (3,\C)$ (still denoted by $\tau$ and $\sigma$) are:
\begin{equation}\label{eqreg}
\tau(g)=P\bar{g}P^{-1} , \qquad \sigma(g)= Q \ (g^t)^{-1} Q^{-1}.
\end{equation}

If we solve $F_{\lambda}^{-1} \ud F_{\lambda}= \alpha_\lambda$ uniquely with certain initial condition $F_{\lambda}(p_0)$ at any base point $p_0$ of $M$, it is easy to show that $F_{\lambda}(p_0)^{-1}F_{\lambda}$ satisfies the reality conditions \eqref{eqre} and therefore lies in the corresponding twisted loop group.

\begin{rem}\label{rem-reframe}
Hereafter we will always choose the initial loop $F_{\lambda}(p_0)= \mathrm{I} $. Then we may conjugate the complex frame to a real $\SL (3,\C)$-frame:
\[
F^{\R} := \mathrm{Ad} \begin{pmatrix} \frac{1}{\sqrt{2}} & \frac{1}{\sqrt{2}} & 0 \\ \frac{\mathrm{i}}{\sqrt{2}} & \frac{-\mathrm{i}}{\sqrt{2}}& 0 \\ 0 & 0 & \sqrt{H/2} \end{pmatrix} \cdot  F_\lambda.
\]
In fact $F^{\R}=(e_{1},e_{2},\xi)$ with $\{e_{1},e_{2}\}$ being simply an orthonormal tangent frame w.r.t. the affine metric. So we obtain an affine sphere immersion $r=-H^{-1} \xi = \mp \xi$. It is clear now that we may also simply take the real part of the last column of $F_\lambda$ to get an equivalent affine sphere modulo affine motions.
\end{rem}

 As in \cite{DoEi01} and \cite{Wa06}, we write

\begin{equation*}
 P= \bpm 0 & \e^4 & \\ \e^2 & 0 & \\ & & 1\\ \epm
 =\bpm \e^4 & & \\ & \e^2 & \\ & & 1\\\epm
 \bpm 0 & 1& \\ 1 & 0 & \\ & & 1 \\ \epm=Q P_{12}=P_{12} Q^{-1},
\end{equation*}

 where
 \beq\label{3n}
 Q=\bpm \e^4 & & \\ & \e^2 & \\ & & 1\\\epm , \quad
 P_{12}=\bpm 0 & 1& \\ 1 & 0 & \\ & & 1 \\ \epm.
 \eeq
 It follows that
\begin{eqnarray} \label{3sigma12}
 \sigma(g) &=&P(g^t)^{-1}P =Q P_{12}(g^t)^{-1}P_{12} Q^{-1}=\sigma_2 (\sigma_1(g)), \notag \\
 \sigma_1 (g(\l)) &=& g(-\l), ~~ \sigma_2 (g(\l)) = g(\epsilon^{-2} \l).
 \end{eqnarray}
where
 \beq \label{3o}
 \sigma_1(g)=P_{12} (g^t)^{-1}P_{12}, \quad \sigma_2(g)=QgQ^{-1},
 \eeq
 and $\sigma=\sigma_1\circ \sigma_2 =\sigma_2\circ \sigma_1$.

\ms

 Let $\C^{\times}:=\C\setminus\{0\}$. We adopt the following notations for loop groups: \par
 $\Lambda G=\{$ holomorphic maps from $\C^{\times}\cap (I_{r}\cup I_{\frac{1}{r}})$ to $G \}$,\par
 $\Lambda_{\mathbb{C}^{\times}} G=\{$ holomorphic maps from $C^{\times}$ to $G \}$,\par
 $\Lambda_{I} G=\{$ holomorphic maps from I to $G$ \},\par
 $\Lambda^{\sigma,\tau} G=\{$  $g\in\Lambda G$: $\tau(g(\bar \l^{-1})) = g(\l)$, $\sigma(g(\l)) = g(\e \l)\}$,\par
 $\Lambda^{\sigma,\tau}_{\mathbb{C}^{\times}} G=\{$  $g\in\Lambda_{\mathbb{C}^{\times}} G$: $\tau(g(\bar \l^{-1})) = g(\l)$, $\sigma(g(\l)) = g(\e \l)\}$,\par
 $\Lambda^{\sigma,\tau}_{I} G=\{$  $g\in\Lambda_{I} G$: $\tau(g(\bar \l^{-1})) = g(\l)$, $\sigma(g(\l)) = g(\e \l)\}$,\par
 where $0<r<1$ is sufficiently small and
 \begin{align*}
 I_{r}=\{\l\in\C: |\l|<r\},  \quad I_{\frac{1}{r}}=\{r\in \C\cup\{\infty\}: |\l|>\frac{1}{r}\}, \quad I = I_{r}\cup I_{\frac{1}{r}}.
 \end{align*}

 Similar notations can apply to their Lie algebras. Then $\alpha_\lambda$ in \eqref{eqfr} is a
$\Lambda_{\mathbb{C}^\times}^{\sigma,\tau} \sli (3,\C) $-valued flat connection, and the corresponding frame
$F_{\lambda}$ lies in $\Lambda_{\mathbb{C}^\times}^{\sigma,\tau} \SL (3,\C)$ for any $0<r<1$. It is remarkable that a simple ``algebraic'' condition characterize such extended frames of proper definite affine spheres:

\begin{thm}[Loop group formulation for proper definite affine spheres]\label{loop}
Let $F(z,\bar{z})$ be any smooth map from a domain in $\C$ to the twisted loop group
$\Lambda_{\C^\times}^{\sigma,\tau} \SL_3 (\C) $. If $F^{-1}F_{z}$ is linear in the loop parameter $\lambda$, i.e. of the form $A \lambda + B$, and $A_{13}$ is nowhere zero, then $F^{-1}\ud F$ is gauge equivalent to the Maurer-Cartan form \eqref{eqla} of proper definite affine spheres. In other words, $F$ is gauge equivalent to the extended frame of a proper definite affine sphere if and only if $[F]$ defines a primitive harmonic map into the $6$-symmetric space $\textrm{``}\SL_3 (\R) \textrm{''}/ \SO_2 (\R)$ with some nondegeneracy condition.
\end{thm}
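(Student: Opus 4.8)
The plan is to transport the two reality conditions \eqref{eqre} onto the logarithmic derivative $\alpha := F^{-1}\ud F = \alpha'\,\ud z + \alpha''\,\ud\bar z$, to normalize its leading coefficient by a reality-compatible gauge, and finally to read the governing system \eqref{eqgc} off the flatness of $\alpha$. Since $\sigma$ is an automorphism and $F$ lies in $\Lambda^{\sigma,\tau}_{\C^\times}\SL_3(\C)$, the form $\alpha$ inherits $\sigma(\alpha(\lambda)) = \alpha(\e\lambda)$; restricting to the $\ud z$-part gives $\sigma(\alpha'(\lambda)) = \alpha'(\e\lambda)$. Plugging in the hypothesis $\alpha' = A\lambda + B$ and matching powers of $\lambda$ forces $\sigma(A) = \e A$ and $\sigma(B) = B$, i.e. $A\in\cg_1$ and $B\in\cg_0$ in the $\Z_6$-grading listed in Section 3 --- this already matches the shape of the $\ud z$-part of \eqref{eqla}.

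Next I would use the conjugate-linear condition $\tau$ to pin down $\alpha''$. Differentiating the frame identity $\tau(F(\bar\lambda^{-1})) = F(\lambda)$ and using that complex conjugation of the $(z,\bar z)$-dependence interchanges $\ud z \leftrightarrow \ud\bar z$ while $\lambda \leftrightarrow \bar\lambda^{-1}$, one obtains $\alpha''(\lambda) = \tau(\alpha'(\bar\lambda^{-1})) = \tau(A)\lambda^{-1} + \tau(B)$. Because $\tau\sigma = \sigma\tau$ with $\tau$ conjugate-linear, $\tau$ carries $\cg_1$ to $\cg_5$ and fixes $\cg_0$, so $\tau(A)\in\cg_5$ and $\tau(B)\in\cg_0$. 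Hence $\alpha$ already has exactly the $\lambda$- and grading-profile of \eqref{eqla}, with unknowns the single entry $a := A_{13} = A_{32}$, its partner $b := A_{21}$, and $\beta := B_{11}$.

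For the normalization I would observe that the reality-compatible gauge group inside $G_0$ is not the real scalings but the circle $\{\diag(e^{\mathrm{i}\theta}, e^{-\mathrm{i}\theta}, 1)\}$ cut out by $\tau(g)=g$; conjugating by such a $z$-dependent $g$ (equivalently replacing $F$ by $Fg$) keeps $Fg$ in the twisted loop group, rotates $a\mapsto e^{-\mathrm{i}\theta}a$, and leaves the whole grading structure intact. As $a$ is nowhere zero I can choose $\theta$ smoothly so that $a/\sqrt{\mp1}$ becomes positive real, then define the real function $\psi$ by $e^{\psi/2}=|a|$, so that $\hat A_{13}=\sqrt{\mp1}\,e^{\psi/2}$, and define $U$ by $\hat A_{21}=Ue^{-\psi}$. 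The delicate point, which I expect to be the main obstacle, is the sign bookkeeping: one must check that the matrix $T$ built into $\tau$ (namely $T_{1}$ for the hyperbolic value $H=-2$ and $T_{-1}$ for the elliptic $H=+2$) makes the $(3,1)$-entry manufactured by $\tau(A)$ land on the same ray as $A_{13}$, and that only phases --- not scalings --- survive in $G_0$; getting the elliptic-versus-hyperbolic signs to conspire correctly is what makes the normalization consistent with both reality conditions at once.

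Finally I would extract the equations from flatness. Since $\alpha = F^{-1}\ud F$ it is automatically flat, giving $\partial_z Q - \partial_{\bar z}P + [P,Q]=0$ with $P = A\lambda+B$, $Q = \tau(A)\lambda^{-1}+\tau(B)$, which I separate by powers of $\lambda$. The $\lambda^{1}$-component lies in $\cg_1$ and yields both $\bar\beta = \partial_{\bar z}\log a$ --- so that after the gauge $\beta = \tfrac12\psi_z$, matching \eqref{eqla} --- and $\partial_{\bar z}(b\,a^2)=0$, which is precisely the holomorphicity $U_{\bar z}=0$, i.e. the Codazzi equation; the $\lambda^{-1}$-component is its $\tau$-conjugate and gives nothing new. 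The $\lambda^{0}$-component lies in $\cg_0$: computing $[A,\tau(A)]\in\cg_0$ (diagonal, with entries built from $|a|^2=e^\psi$ and $|b|^2=|U|^2e^{-2\psi}$, the signs fixed by the $T_{\pm1}$ choice) turns its $(1,1)$-entry into $\psi_{z\bar z}+\tfrac{H}{2}e^{\psi}+|U|^2e^{-2\psi}=0$ with $H=\mp2$, the Gauss equation. Thus the gauged $\hat\alpha$ coincides with \eqref{eqla} for $(\psi,U,H)$ solving \eqref{eqgc}, and by the fundamental theorem for definite affine surfaces recalled in Section 3, $\hat\alpha$ integrates to a proper definite affine sphere whose extended frame is gauge-equivalent to $F$. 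The converse is immediate, since the derivation of \eqref{eqla} shows the extended frame of any such sphere has $F^{-1}F_z$ of degree one in $\lambda$ with $A_{13}=\sqrt{\mp1}e^{\psi/2}$ nowhere zero; and the condition that $F^{-1}F_z$ be of the form $\cg_1\lambda+\cg_0$ with $A_{13}\neq0$ is exactly the statement that $[F]$ is a (nondegenerate) primitive harmonic map into the $6$-symmetric space $\SL_3(\R)/\SO_2(\R)$.
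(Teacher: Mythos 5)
Your proposal is correct and follows essentially the same route as the paper: deduce from the $\sigma$- and $\tau$-reality conditions that $F^{-1}F_z=A\lambda+B$ with $A\in\cg_1$, $B\in\cg_0$ and $F^{-1}F_{\bar z}=\tau(A)\lambda^{-1}+\tau(B)$, gauge by the circle $\diag(e^{\mathrm{i}\theta},e^{-\mathrm{i}\theta},1)$ fixed by both involutions to make $A_{13}$ proportional to $e^{\psi/2}$, and then read off the Gauss and Codazzi equations from flatness. You merely carry out explicitly the $\lambda$-graded flatness computation that the paper compresses into ``the rest follows from the equations of flatness.''
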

\begin{proof}
For simplicity, we only show the hyperbolic ($H=-1$) case in the flat connection \eqref{eqla}. The positive case is completely parallel.

The reality conditions \eqref{eqre} guarantee that $F^{-1}F_z$ must be linear in $\lambda$. So we have
\begin{equation} \label{eqloop}
F^{-1}F_{z} = A \lambda + B, \qquad F^{-1}F_{\bar{z}} = C^{-1} \lambda + D,
\end{equation}
with $A \in \fg_{-1}$, $B \in \fg_0$, $C=\tau(A)$, and $D=\tau(B)$. The fixed points of both $\sigma$ and $\tau$ are of the form $\diag (e^{i\beta}, e^{-i\beta}, 1 )$. Gauging by them respect the reality conditions. Let $e^{i\beta}=\pm \frac{A_{13}}{|A_{13}|}$. Use it to gauge $A_{13}$ to a real positive function which is then set to $e^{\psi/2}$. The rest follows from the equations of flatness.
\end{proof}
\section{Simple elements in  $\Lambda^{\sigma,\tau}_{I} GL(3,\C)$}

A rational element in any loop group is usually called \textbf{a simple element} if it has the least number of simple poles. To construct the simple elements in the twisted loop group $\Lambda^{\sigma,\tau}_{I} GL(3,\C)$, we will handle order 3 $\sigma_2$-twisting first:
 \blem ([7])
      A simple element  $g\in\Lambda^{\sigma_{2}}_{I} GL(3,\C)$ has three simple poles and always take  the following form:
  \beq \label{4a}
  g(\l)=A \,(I+\frac{R}{\l -\a}+\frac{\e^2Q^{-1}RQ}{\l -\e^2\a} +\frac{\e^4 QRQ^{-1}}{\l -\e^4\a}),
  \eeq
  where A is diagonal and
  \[
  Q=diag(\epsilon^4,\epsilon^2,1), \quad \e=e^{\frac{\pi i}{3}}, \quad r < |\a| < 1/r.
  \]
 \elem
 \ms

To construct simple element in $\Lambda^{\sigma}_{I} GL(3,\C)$, plug  $\eqref{4a}$  into $\sigma_1$-reality condition in \eqref{3sigma12}, we obtain that $g \in \Lambda^{\sigma}_{I} GL(3,\C)$ iff
 $$ g(-\l) \; P_{12} \; g^t(\l) =P_{12}, $$ where $P_{12}$ is given in $\eqref{3n}$, i.e.
\beq \label{4b}
g(-\l)\; P_{12} \; \left( I+\frac{R^t}{\l -\a}+\frac{\e^2QR^t Q^{-1}}{\l -\e^2\a} +\frac{\e^4 Q^{-1}R^t Q}{\l -\e^4\a} \right)A^t=P_{12}.
 \eeq
Compute the LHS of $\eqref{4b}$ at $\l =\infty$ to get
 $$A\;P_{12}\;A^t =P_{12}.$$
Write $A=\diag (d_1,d_2,d_3)$, and we have
$$ d_1 d_2 =1, \quad d_3^2 =1.$$  So
\begin{equation} \label{4c}
A=\diag(d, d^{-1},\pm1),
\end{equation}
where $d\in \C^{\times}$.

Compute the residue of the LHS of \eqref{4b} at $\l=\a$ to get
\beq \label{4d}
g(-\a) \;P_{12}\; R^t A^t=0, \eeq
i.e.
$$ A(I+\frac1\a (-\frac{R}{2}+\e^{-2}Q^{-1}RQ+\e^2 QRQ^{-1}))\;P_{12}\;R^t A^t=0. $$
Write $R=(b_{ij})_{1\leq i,j\leq3}$. Then direct computation implies
\beq \label{4e}
\bpm d & & \\ & d^{-1} & \\ & & \pm1\\ \epm \left\{I+\frac1\a \left( -\frac32 R +3
\bpm 0 & 0 & b_{13} \\ b_{21} & 0 & 0 \\ 0 & b_{32} & 0 \\ \epm \right)   \right\}\; P_{12}\;R^t A^t=0.
 \eeq
 From $\eqref{4e}$ we deduce that $rank(R)=3$ is impossible. If $rank(R)=0$, we get $g(\l) = A$ is trivial.  If $rank(R)=1$, we assume that
 \beq \label{4f}
 R=\bpm b_2 \\c_2 \\ e_2 \epm \bpm b_1, & c_1, & e_1 \epm.   \eeq
 Substituting $\eqref{4f}$ into $\eqref{4e}$ and computing directly, we have
 \begin{equation} \label{4f2}
 R= \frac{2\a}{3}\bpm \frac{c_1}{2b_1c_1 -1} \\ b_1 \\ 1 \epm \bpm b_1, & c_1, & 1 \epm, \quad  2b_1c_1 -1 \not=0.
 \end{equation}
 Residue of LHS of $\eqref{4b}$ at $\l=\e^2 \a$ gives
 $$g(-\e^2\a) \;P_{12}\;QR^t Q^{-1} A^t=0,$$
 which is equivalent to
 $$ Q g(-\e^2\a) Q^{-1} \;P_{12}\;R^t A^t=0,$$
 i.e. $$\sigma_2(g(-\e^2\a)) \;P_{12}\;R^t A^t=0.$$
 This is equivalent to $\eqref{4d}$. Residue at $\l=\e^4 \a$ is also equivalent to $\eqref{4d}$. So the loop group element of \textbf{rank 1 type} is as follows:
 \beq \label{4g}
g(\l)=  \bpm d & & \\ & d^{-1} & \\ & & \pm1\\ \epm \left[ I + \frac{2}{\l^3-\alpha^3} \bpm \frac{\a^{3}b_1c_1}{2b_1c_1-1} & \frac{\a\l^{2}{c_1}^{2}}{2b_1c_1-1}& \frac{\a^{2}\l c_1}{2b_1c_1-1} \\ \a^2\l b_{1}^{2}& \a^{3}b_1c_1 & \a\l^2 b_1\\ \a\l^2 b_1 & \a^2\l c_1 & \a^3\epm   \right] ;
\eeq

 By explicit computation, if $rank(R)=2$, we have
\[
 R=\frac{2\a}{3}
  \bpm  \frac{b_1c_1-1}{2b_1c_1-1} & \frac{-{c_1}^{2}}{2b_1c_1-1}& \frac{c_1}{2b_1c_1-1} \\  b_{1}^{2}& (1- b_1c_1) & -b_1\\ -b_1 &  c_1 & 0\epm, \quad  2b_1c_1 -1 \not=0,
\]
with the corresponding \textbf{rank 2 type} loop group element
\beq \label{4h}
g(\l) =  \bpm d & & \\ & d^{-1} & \\ & & \pm1\\ \epm \left[ I + \frac{2}{\l^3-\alpha^3} \bpm \frac{\a^{3}(b_1c_1-1)}{2b_1c_1-1} & \frac{-\a\l^{2}{c_1}^{2}}{2b_1c_1-1}& \frac{\a^{2}\l c_1}{2b_1c_1-1} \\ \a^2\l b_{1}^{2}& \a^{3}(1-b_1c_1) & -\a\l^2 b_1\\ -\a\l^2 b_1 & \a^2\l c_1 & 0\epm   \right].
\eeq
We also compute that the determinant of $g(\l)$ for the both cases are $[(\l^3+
\alpha^3)/ (\l^3-\alpha^3)]^{rank(R)}$, i.e. only depending on the poles and the rank of the residues.

Let $l :=(b,c,1)$, $\ell$ be the line $\mathbb{C} \cdot l$, and introduce the following `cone':
\[
\Delta:=\{(z_1,z_2,z_3) \in \mathbb{C}^3 \; | \; 2z_1z_2=z_3^2, \textrm{or} \; z_3=0\}.
\]
Then $2bc\neq1$ is equivalent to $\ell\nsubseteq\Delta$. We observe that $\text{Image} (\text{Res}_{\a}\; g^t) $ for rank 1 type is and $ \text{Kernel}(\text{Res}_{\a}\; g \; P_{12})$ for rank 2 type are both $\ell^t:=\mathbb{C}\cdot l^t $ .
\begin{notation}
Since a line $\ell$ not in $\Delta$, a complex number $\a$ and a diagonal matrix A  \eqref{4c} determine $g(\l)$ uniquely in both types, we always use $A g_{\a,\ell}(\l)$ to denote the rank 1 type element \eqref{4g} and use $A m_{\a,\ell}(\l)$ to denote the rank 2 type element \eqref{4h}.
\end{notation}
We summarize the above computations and some basic but useful facts into the following proposition:

 \bprop \label{p42}
Any simple element in $ \Lambda^{\sigma}_{I} GL(3,\C)$ is either $A g_{\a,\ell}(\l)$ of rank 1 type \eqref{4g} or $A m_{\a,\ell}(\l)$ of rank 2 type \eqref{4h},  and they have the following properties:   \\
(1)~~the determinant  is $[(\l^3+\alpha^3)/ (\l^3-\alpha^3)]^{rank(R)}$, independent of $\ell$; \\
(2)~~$A m_{\a,\ell}(\l) = \frac{\l^3+\a^3}{\l^3-\a^3} A g_{\a,\ell}(-\l)$; \\
(3)~~$  A \ g_{\a,\ell}(\l) = g_{\a,\ell A^{-1}}(\l) \ A.$
\eprop

\ms
Finally, we consider the $\tau$-reality condition. Recall  $g(\l) \in \Lambda^{\sigma}_{I} GL(3,\C)$  satisfies $\tau$-reality condition iff
 $$ g(\l)=\tau(g(\bar \l^{-1})) = T \overline{g(\bar \l^{-1})} T^{-1}.$$
If $g$ has a pole in $\a$ , it must also have a pole in $\bar \a^{-1}$. So $|\a|$ must be $1$ if the above simple element $A g_{\a,\ell}(\l)$ or $A m_{\a,\ell}(\l)$ satisfies $\tau$-reality condition. We will treat hyperbolic case ($\tau=\tau_1$) and elliptic case ($\tau=\tau_2$) separately.

(1) For hyperbolic case, we first consider the rank 2 type $A m_{\a,\ell}(\l) $, then
\beq \label{4i}
 \bar A \bar m_{\a,\ell}(\bar\l^{-1})  m_{\a,\ell}^{t} (-\l) A^t = I .
 \eeq
 We divide our computations into the following steps: \\
(i) Evaluate LHS of $\eqref{4i}$ at $\l=\infty$ to get
 $$ \bar A (I-3\bar \a^{-1} \diag(\bar R)) A^{t} =I, $$
 where $\diag(M)$ is the diagonal part of a matrix $M$. This implies that
 \begin{align*}
 \bpm \frac{\bar d}{2\bar b_1 \bar c_1 -1} & &  \\ & \bar d^{-1} (2\bar b_1 \bar c_1-1) & \\ & & \pm 1 \epm
  \bpm d & & \\ & d^{-1} & \\ & & \pm 1\epm
  =I.
 \end{align*}
 It follows that
 \begin{align} \label{4j}
 |d|^2 =(2\bar b_1\bar c_1 -1).
 \end{align}
(ii) Compute the residues of LHS of $\eqref{4i}$ at $\l=-\a$ and $\l=\bar \a^{-1}$ to get
 $$  \overline{A m_{\a,\ell}(-\bar\a^{-1})}R^{t}  =0,$$
i.e.
 \beq \label{4k}
  m_{\a,\ell}(-\bar\a^{-1}) \bar R^{t} =0.
  \eeq
 Residues of LHS of $\eqref{4i}$ at $\l=-\e^2\a,\ -\e^4 \a$ give the same condition $\eqref{4k}$. From \eqref{4k}, we have:
 \[
 \left[ I + \frac{2}{-\alpha^3-\alpha^3} \bpm \frac{\a^{3}(b_1c_1-1)}{2b_1c_1-1} & \frac{-\a\l^{2}{c_1}^{2}}{2b_1c_1-1}& \frac{\a^{2}\l c_1}{2b_1c_1-1} \\ \a^2\l b_{1}^{2}& \a^{3}(1-b_1c_1) & -\a\l^2 b_1\\ -\a\l^2 b_1 & \a^2\l c_1 & 0\epm   \right]
 \;\; \overline{\bpm  \frac{b_1c_1-1}{2b_1c_1-1} & \frac{-{c_1}^{2}}{2b_1c_1-1}& \frac{c_1}{2b_1c_1-1} \\  b_{1}^{2}& (1- b_1c_1) & -b_1\\ -b_1 &  c_1 & 0\epm^t}=0
 \]
i.e.
\[
\bpm  \frac{b_1c_1}{2b_1c_1-1} & \frac{{c_1}^{2}}{2b_1c_1-1}& \frac{c_1}{2b_1c_1-1} \\  b_{1}^{2}&  b_1c_1 & b_1\\ b_1 &  c_1 & 1\epm
\bpm  \ms \overline{\frac{b_1c_1-1}{2b_1c_1-1}} &\overline{ b_{1}^{2}}& \overline{-b_1 }\\  \ms \overline{\frac{-{c_1}^{2}}{2b_1c_1-1}} & \overline{(1- b_1c_1)} &\overline{c_1} \\\overline{\frac{c_1}{2b_1c_1-1}} &\overline{-b_1} & 0\epm=0
\]
i.e.
\[
(b_1,c_1,1) \;\; \bpm  \overline{b_1c_1-1} &\overline{ b_{1}^{2}}& \overline{-b_1 }\\ \overline{-{c_1}^{2}} & \overline{(1- b_1c_1)} &\overline{c_1} \\\overline{c_1} &\overline{-b_1} & 0\epm=0.
\]
Direct computation implies that:
\begin{align} \label{4l}
c_1=\bar b_1.
\end{align}
Combine \eqref{4j} and \eqref{4l}, we get that $A m_{\a,\ell}(\l) \in \Lambda^{\sigma,\tau_1}_{I} GL(3,\C)$ iff
\[
|\a|=1,\;\;\ c_1=\bar b_1, \;\;\;  |d|^2 =(2\bar b_1\bar c_1 -1)=(2 | b_1|^2 -1)>0.
\]
Then $A m_{\a,\ell}(\l)$ can be written as
\begin{align} \label{4l2}
A m_{\a,\ell}(\l) =\bpm d & & \\ & d^{-1} & \\ & & \pm1\\ \epm \left[ I + \frac{2}{\l^3-\alpha^3} \bpm \frac{\a^{3}(|b|^2-1)}{2|b|^2-1} & \frac{-\a\l^{2}{\bar b}^{2}}{2|b|^2-1}& \frac{\a^{2}\l \bar b}{2|b|^2-1} \\ \a^2\l b^{2}& \a^{3}(1-|b|^2) & -\a\l^2 b\\ -\a\l^2 b & \a^2\l \bar b & 0\epm   \right].
\end{align}
If we consider the rank 1 type $A g_{\a,\ell}(\l)$, by the similar computation, we will get
\[
|d|^2 =(2\bar b_1\bar c_1 -1), ~~~ c_1 = - \bar b_1,
\]
i.e. $|d|^2 =-(2 |b_1|^2 +1)$, which is a contradiction. So there is no rank 1 type simple element in  $ \Lambda^{\sigma}_{I} GL(3,\C)$ satisfies the $\tau_1$-reality condition.

\ms
\ms
(2) For elliptic case, we consider the rank 1 type $A g_{\a,\ell}(\l) $, then
\begin{align} \label{4m}
\bar A \bar g_{\a,\ell}(\bar\l^{-1}) I_{2,1} g_{\a,\ell}^{t} (-\l) A^t = I_{2,1} .
\end{align}
here
\[
I_{2,1}=\bpm  1 & 0&0 \\ 0& 1 & 0\\0 & 0 & -1\epm
\]
(i) Evaluate LHS of $\eqref{4m}$ at $\l=\infty$ to get
\begin{align} \label{4n}
 |d|^2 =(2\bar b_1\bar c_1 -1)
\end{align}
(ii) Evaluate LHS of $\eqref{4m}$ at $\l=-\a$ and $\l=\bar \a^{-1}$ to get
\begin{align} \label{4o}
c_1=\bar b_1
\end{align}
Combine \eqref{4n} and \eqref{4o}, we get that $A g_{\a,\ell}(\l) \in \Lambda^{\sigma,\tau_2}_{I} GL(3,\C)$ iff
\begin{align} \label{4p}
|\a|=1,\;\;\ c_1=\bar b_1, \;\;\;  |d|^2 =(2\bar b_1\bar c_1 -1)=(2 | b_1|^2 -1)>0.
\end{align}
Then $A g_{\a,\ell}(\l)$ can be written as
\begin{align} \label{4q}
A g_{\a,\ell}(\l)
 &=\bpm d & & \\ & d^{-1} & \\ & & \pm1\\ \epm \left[ I + \frac{2}{\l^3-\alpha^3} \bpm \frac{\a^{3}|b|^2}{2|b|^2-1} & \frac{\a\l^{2}{\bar b}^{2}}{2|b|^2-1}& \frac{\a^{2}\l \bar b}{2|b|^2-1} \\ \a^2\l b^{2}& \a^{3}|b|^2 & -\a\l^2 b\\ \a\l^2 b & \a^2\l \bar b & \a^3\epm   \right].
\end{align}
If we consider the rank 2 type in this case, we will induce the same contradiction as the hyperbolic case. So there is no rank 2 type simple element in  $ \Lambda^{\sigma}_{I} GL(3,\C)$ satisfies the $\tau_2$-reality condition.

\ms
\ms

When $g(\l) \in \Lambda^{\sigma,\tau}_{I} GL(3,\C)$ has a pole at $\a$ with $|\a|\neq 1$, ($\sigma, \tau$)-reality condition implies that it also has the same type of poles at $\{\e^{2}\a,\e^{4}\a, \bar{\a}^{-1},\e^{2}\bar{\a}^{-1},\e^{4}\bar{\a}^{-1}\}$. So we will simply try the product of two simple elements in $\Lambda^{\sigma}_{I} GL(3,\C)$ with poles at $\{\a,\e^{2}\a,\e^{4}\a \}$ and $\{\bar{\a}^{-1},\e^{2}\bar{\a}^{-1},\e^{4}\bar{\a}^{-1} \}$ respectively, i.e., we try $h(\l)=A_1 g_{\a.\ell_1}(\l) A_2 g_{\bar \a^{-1},\ell_2} (\l)$. Considering  the  Proposition \ref{p42} property (3), we have
\begin{align*}
h(\l)&=A_1 g_{\a.\ell_1}(\l) A_2 g_{\bar \a^{-1},\ell_2}(\l) \\
&=A_1A_2 \ g_{\a.\ell_1A_2}(\l) g_{\bar \a^{-1},\ell_2}(\l)\\
&= \tilde{A}_1 g_{\a.\tilde{\ell}_1}(\l) g_{\bar \a^{-1},\ell_2}(\l).
 \end{align*}
Without loss of generality, we set
  \begin{align} \label{4r}
  h(\l)=A g_{\a,\ell_1}(\l) g_{\bar \a^{-1},\ell_2} (\l)  = (h_{ij})_{1\leq i,j \leq3}
  \end{align}
here
\begin{align*}
&h_{11} =d \frac{ [\l^3(2b_1c_1-1)+\a^3][\l^3\bar \a^{3}(2b_2c_2-1)+1]+4 \l^3 |\a|^2 b_2c_1(2b_2c_2-1)(b_2c_1 + |\a|^2)}
{ (\l^3-\a^3)(\l^3\bar \a^{3}-1)(2b_1c_1-1)(2b_2c_2-1)}, \\
&h_{12} =d \frac{2 \l^2\bar \a^{2}c_2^2[\l^3(2b_1c_1-1)+\a^3]+(2b_2c_2-1)[2\l^2 \a c_1^2(2b_2c_2-1+\l^3\bar \a^{3})+4 \l^2 \a^2 \bar \a c_1c_2]}
{  (\l^3-\a^3)(\l^3\bar \a^{3}-1)(2b_1c_1-1)(2b_2c_2-1)}, \\
&h_{13} = d \frac{2 \l \bar \a c_2[\l^3(2b_1c_1-1)+\a^3]+(2b_2c_2-1)[4\l^4 \a \bar \a^2 b_2 c_1^2+2 \l \a^2 c_1(\l^3\bar \a^3+1)]}
{(\l^3-\a^3)(\l^3\bar \a^{3}-1)(2b_1c_1-1)(2b_2c_2-1)},\\
&h_{21} = \frac{2\l \a^2 b_1^2[\l^3\bar \a^3(2b_2c_2-1)+1]+(2b_2c_2-1)[2\l \bar \a b_2^2(2\a^3b_1c_1-\a^3+\l^3)+4\l^4 \a \bar \a^2 b_1b_2]}
{d (\l^3-\a^3)(\l^3\bar \a^{3}-1)(2b_2c_2-1)}, \\
&h_{22} = \frac{4\l^3 |\a|^4 b_1^2 c_2^2+(2b_2c_2-1)\{[\a^3(2b_1c_1-1)+\l^3](2b_2c_2-1+\l^3 \bar \a^3)+4\l^3\a \bar \a^2 b_1b_2\}}
{d (\l^3-\a^3)(\l^3\bar \a^{3}-1)(2b_2c_2-1)},\\
&h_{23} =  \frac{4\l^2 \a^2 \bar \a b_1^2 c_2+(2b_2c_2-1)[2\l^2 \bar \a^2 b_2(2\a^3b_1c_1-\a^3+\l^3)+2 \l^2 \a b_1(\l^3 \bar\a^3+1)]}
{d (\l^3-\a^3)(\l^3\bar \a^{3}-1)(2b_2c_2-1)}, \\
&h_{31} =\pm \frac{2 \l^2 \a b_1 [\l^3\bar\a^3(2b_2c_2-1)+1]+(2b_2c_2-1)[4\l^2 \a^2 \bar \a b_2^2c_1+2 \l^2 \bar \a^2 b_2(\l^3+\a^3)]}
{ (\l^3-\a^3)(\l^3\bar \a^{3}-1)(2b_2c_2-1)},\\
&h_{32} = \pm \frac{4\l^4 \a \bar \a^2b_1 c_2^2+(2b_2c_2-1)[2\l\a^2 c_1(\l^3 \bar \a^3+2b_2c_2-1)+2 \l \bar \a c_2(\l^3+\a^3)]}
{(\l^3-\a^3)(\l^3\bar \a^{3}-1)(2b_2c_2-1)}, \\
&h_{33} =\pm \frac{4\l^3 |\a|^2b_1 c_2+(2b_2c_2-1)[4\l^3 |\a|^4 b_2c_1+ (\l^3+\a^3)(\l^3 \bar\a^3+1)]}
{(\l^3-\a^3)(\l^3\bar \a^{3}-1)(2b_2c_2-1)},
\end{align*}
all these entries of $h(\l)$ are computed by Maple. It satisfies $\tau$-reality condition iff
 $$ \tau(h(\bar \l^{-1})) =h(\l),$$
 i.e.
 \beq \label{4s}
 \bar A \bar g_{\a,\ell_1}(\bar\l^{-1})   \bar g_{\bar \a^{-1},\ell_2} (\bar\l^{-1}) \ P_{12}\,T \   g_{\bar \a^{-1},\ell_2}^{t} (-\l) ^t g_{\a,\ell_1}^{t} (-\l) A^t =P_{12}\ T .
 \eeq

Step 1: Evaluate LHS of $\eqref{4s}$ at $\l=\infty$ to get
 $$ \bar A (I-3\bar \a^{-1} \diag(\bar R_1)) (I-3 \a \diag(\bar R_2)) \ P_{12}\,T   A^{t} =\ P_{12}\,T .\,$$
 This implies that
 \begin{align*}
 &\bpm \frac{-\bar d}{2\bar b_1 \bar c_1 -1} & &  \\ & \bar d^{-1} (1-2\bar b_1 \bar c_1) & \\ & & \mp 1 \epm
 \bpm\frac{-1}{2\bar b_2 \bar c_2 -1} & &  \\ &  (1-2\bar b_2 \bar c_2) & \\ & & -1\epm
 \ P_{12}\,T \ \bpm d & & \\ & d^{-1} & \\ & & \pm 1\epm
 \\
 & =\ P_{12}\,T \ .
 \end{align*}
 It follows that

 \begin{equation}\label{4w}
 |d|^2 =(2\bar b_1\bar c_1-1) (2\bar b_2\bar c_2 -1).
 \end{equation}

 \ms
Step 2: Compute the residue of LHS of $\eqref{4s}$ at $\l=-\a$ and $\l=\bar \a^{-1}$ to get
 $$  \overline{A g_{\a,\ell_1}(-\bar\a^{-1})g_{\bar \a^{-1},\ell_2} (-\bar\a^{-1})}\ P_{12}\,T \ g_{\bar\a^{-1},\ell_2}^{t}(\a)R_1^{t}  =0.$$
 Since $Ag_{\a,\ell_1} (-\bar \a^{-1})$ is invertible, we have
 \beq \label{4t}
  \overline{ g_{\bar \a^{-1}.\ell_2} (-\bar\a^{-1})} \ P_{12}\,T \ g_{\bar\a^{-1},\ell_2}^{t}(\a)R_1^{t}  =0.
 \eeq
 Residues of LHS of $\eqref{4s}$ at $\l=-\e^2\a,\ -\e^4 \a$ give the same condition $\eqref{4t}$.
 Compute the residue of LHS of $\eqref{4s}$ at $\l=-\bar\a^{-1}$ and $\l=\a$ to get
 $$ \overline{ A g_{\a,\ell_1} (-\a) g_{\bar \a^{-1},\ell_2} (-\a)} \ P_{12}\,T \ R_2^{t} g_{\a,\ell_1}^{t} (\bar\a^{-1}) =0.$$
 But $g_{\a,\ell_1} (\bar\a^{-1})$ is invertible, so
 \beq \label{4u}
  \overline{A g_{\a,\ell_1} (-\a)  g_{\bar \a^{-1},\ell_2} (-\a)} \ P_{12}\,T \ R_2^{t}=0.
 \eeq

 \ms
Step 3: Consider $\eqref{4u}$. \eqref{4d} implies
 $\ker A g_{\a,\ell_1} (-\a)$ is spanned by $(c_1,b_1,1)^t$. So \eqref{4u} implies that
 \begin{equation}\label{4u2}
  g_{\bar \a^{-1},\ell_2} (-\a) \ P_{12}\,T \ \bpm \bar b_2 \\ \bar c_2 \\ 1 \epm  \parallel  \bpm c_1 \\b_1 \\1 \epm,
 \end{equation}

 where $\parallel$ means two vectors are parallel.
 Tedious, but direct computation implies that
  \beq \label{4v}
  g_{\bar \a^{-1},\ell_2} (-\a) \bpm \bar b_2 \\ \bar c_2 \\ -H/2 \epm   \parallel  \bpm L_1 \\L_2 \\L_3 \epm,\eeq
  where
$$
 \bpm L_1 \\L_2 \\L_3 \epm = \bpm
 \bar b_2 -\frac{2 c_2}{(2b_2c_2-1)(1+|\a|^6)} (|b_2|^2 +|\a|^4 |c_2|^2 +\frac{H}{2}|\a|^2) \\   \bar c_2  -\frac{2 b_2}{1+|\a|^6} (-|\a|^2 |b_2|^2 +|c_2|^2 -\frac{H}{2} |\a|^4 ) \\ -\frac{H}{2}-\frac2{1+|\a|^6} (|\a|^4 |b_2|^2 -|\a|^2 |c_2|^2 -\frac{H}{2}) \epm. $$
 By $\eqref{4v}$, we have
 \begin{align}
  b_1 &=\frac{- b_2 (-|\a|^2 |b_2|^2 +|c_2|^2-\frac{H}{2}|\a|^4) +\frac12 (1+|\a|^6) \bar c_2}{-|\a|^4|b_2|^2 +|\a|^2 |c_2|^2 +\frac H4  (1-|\a|^6)},   \label{4x}\\
   c_1 &=\frac{- c_2 (|b_2|^2 +|\a|^4 |c_2|^2+\frac{H}{2}|\a|^2) +\frac12 (2b_2c_2 -1) (1+|\a|^6) \bar b_2}{(2b_2c_2 -1)[-|\a|^4|b_2|^2 +|\a|^2 |c_2|^2 +\frac H4  (1-|\a|^6)]}. \label{4y}
 \end{align}

 \ms
Step 4: Consider $\eqref{4t}$. $\sigma_1$-reality condition of $g_{\bar \a^{-1},\ell_2}(\l)$ implies
 $$ g_{\bar \a^{-1},\ell_2}(-\l) \ P_{12} \  g_{\bar \a^{-1},\ell_2}^t(\l)=P_{12}.$$
 Compute residue at $\l=\bar \a^{-1}$ to get
 $$ g_{\bar \a^{-1},\ell_2}(-\bar \a^{-1}) \ P_{12} \  R_2^t =0,$$
 which implies that
 $$\ker  g_{\bar \a^{-1},\ell_2}(-\bar \a^{-1}) =\Span_\C \bpm c_2 \\ b_2\\1 \epm.$$
 So \eqref{4t} implies that
 $$ P_{12}\  T g_{\bar \a^{-1},\ell_2}^{*} (\a) \bpm \bar b_1 \\ \bar c_1 \\ 1 \epm \parallel  \bpm b_1 \\ c_1 \\1 \epm. $$
 It is equivalent to $\eqref{4v}$.

Conversely, if the product of two simple element $h(\l)$ satisfies \eqref{4w}, \eqref{4x} and \eqref{4y}, $h(\l)$ satisfies  the $\tau$-reality condition.

We conclude the above computations into the following theorem.

\bthm \label{t43}
(1)Take any $\a\in \mathbf{S}^1 $ and  $b \in \mathbb{C}$ so that $|b|^2>\frac{1}{2}$. Then  \\
$c=\bar b $ and
$
|d|^2 =(2 | b|^2 -1)
$
  $\Leftrightarrow$ $A m_{\a,\ell}(\l) \in \Lambda^{\sigma,\tau_1}_{I} GL(3,\C) \Leftrightarrow A g_{\a,\ell}(\l) \in \Lambda^{\sigma,\tau_2}_{I} GL(3,\C)$ .\\
(2)Take any $\a\in \C^{\times}/\mathbf{S}^1 $, $b_2, c_2\in \C$, $2b_2c_2\neq1$, define
\begin{align*}
&|d|^2 =(2 b_1 c_1-1)(2 b_2c_2-1),  \\
 b_1 &=\frac{- b_2 (-|\a|^2 |b_2|^2 +|c_2|^2-\frac{H}{2}|\a|^4) +\frac12 (1+|\a|^6) \bar c_2}{-|\a|^4|b_2|^2 +|\a|^2 |c_2|^2 +\frac H4  (1-|\a|^6)},  \\
   c_1 &=\frac{- c_2 (|b_2|^2 +|\a|^4 |c_2|^2+\frac{H}{2}|\a|^2) +\frac12 (2b_2c_2 -1) (1+|\a|^6) \bar b_2}{(2b_2c_2 -1)[-|\a|^4|b_2|^2 +|\a|^2 |c_2|^2 +\frac H4  (1-|\a|^6)]},
   \end{align*}
\[
   A=\diag (d,d^{-1},1),~~
   \ell_1= \mathbb{C}\cdot ( b_1,  c_1,  1) ,~~\ell_2= \mathbb{C}\cdot ( b_2, c_2, 1 ).
\]
Then $h(\l):=Ag_{\a,\ell_1}(\l) g_{\bar \a^{-1},\ell_2} (\l) \in \Lambda^{\sigma,\tau}_{I} GL(3,\C)$ .

\ethm
\rem  It would be interesting to consider the converse question: whether any rational element with 6 poles must take the form of $h(\l)$? More generally, it would be interesting to know whether any rational element can be factorized into product of simple ones:
$A m_{\a,\ell}(\l)$(or $A g_{\a,\ell}(\l)$) and $h(\l)$? Such factorization problem has been treated by Uhlenbeck \cite{Uh89}  in the  case  of unitary group. We will answer these questions in a subsequent paper.

\rem  \label{r45}
The formula \eqref{4w} implies $(2 b_1 c_1-1)(2 b_2 c_2-1)$ decides $|d|^2$. Since $|d|^2$ is real positive, there is an important restriction that the choice of $\a,b_2,c_2$ must keep
\[
(2 b_1 c_1-1)(2 b_2 c_2-1)>0.
  \]
Substitute $b_1,\ c_1$ into $\eqref{4w}$, we get
\begin{align*}
|d|^2=(2b_1 c_1-1)(2b_2 c_2-1)=\frac{\Psi_{\a,b_2,c_2}}{(-|\a|^4|b_2|^2 +|\a|^2 |c_2|^2 +\frac H4  (1-|\a|^6))^2},
\end{align*}
here
\begin{align} \label{4z}
\Psi_{\a,b_2,c_2}=&\frac{1}{4}|2b_2c_2-1|^2|\a|^{12}-|c_{2}|^{4}|\a|^{10}-H|c_{2}|^{2}|\a|^{8}+(-2 Re(b_2c_2)-\frac{1}{2})|\a|^6  \notag \\
&-H|b_2|^{2}|\a|^{4}-|b_2|^{4}|\a|^{2}+\frac{1}{4}|2b_2c_2-1|^{2}.
\end{align}
It is easy to see $(2 b_1  c_1-1)(2 b_2 c_2-1)$ is real and only need to choose $\a,b_2,c_2$ such that $\Psi_{\a,b
_2,c_2}>0$, then $(2 b_1  c_1 -1)(2 b_2 c_2-1) > 0$. For convenience, we will always assume $\a,b_2,c_2$ satisfy this restriction when we choose a simple element $h(\l)$ afterward.

\section{Dressing actions of simple elements}
In this section we use the simple element to give the dressing actions on definite affine spheres.\par
Let us review the technique of dressing action (the idea of dressing was dated from \cite{Zak79} but see \cite{Gue79} or \cite{Te08} for an elementary introduction).
Let $G=SL(3,\C)$, $g(\l)\in \Lambda_{I}^{\tau,\sigma}G$, and $E(z,\bar z, \lambda)\in \Lambda_{\mathbb{C}^{\times}}^{\sigma.\tau}G$ is the frame of a definite affine sphere. Assume we can do the following factorization for each fixed $(z, \bar z)$:
\begin{equation} \label{5a}
g(\l)E(z,\bar z,\l)=\tilde E(z,\bar z,\l) \tilde g(z,\bar z,\l),
\end{equation}
with $\tilde E(z,\bar z,\l)\in \Lambda_{\mathbb{C}^{\times}}^{\tau,\sigma}G$, $\tilde g(z,\bar z,\l)\in \Lambda_{I}^{\tau,\sigma}G$. Then $\tilde E(z,\bar z,\l)$ will also be the frame of a new definite affine sphere. We sketch the proof here. By Theorem \ref{loop}, it suffices to prove that $\tilde E^{-1}\tilde E_{z}$ and $\tilde E^{-1}\tilde E_{\bar z}$ are linear in $\l$ and $\l^{-1}$ respectively.
But
\begin{align*}
\tilde E^{-1}\tilde E_{z}&=\tilde g(z,\bar z,\l) E^{-1}E_{z}\tilde g(z,\bar z,\l)^{-1}-\tilde g(z,\bar z,\l)_{z}\tilde g(z,\bar z,\l)^{-1}\\
&=\tilde g(z,\bar z,\l)(u_{0}+\l u_{1})\tilde g(z,\bar z,\l)^{-1}-\tilde g(z,\bar z,\l)_{z}\tilde g(z,\bar z,\l)^{-1}.
\end{align*}
On the left hand side $\tilde E^{-1}\tilde E_{z}$ is holomorphic in $\l \in \C^\times$; on the right it has a simple pole at $\infty$. So
\[
\tilde E^{-1}\tilde E_{z}=\tilde u_{0}+\l \tilde u_{1},
\]
where $\tilde u_{0}$, $\tilde u_{1}$ are independent of $\l$. Similarly, $\tilde E^{-1}\tilde E_{\bar z}$  is linear in $\frac{1}{\l}$. This completes the proof.

Furthermore, $g\ast E := \tilde{E}$ defines a group action of $\Lambda_{I}^{\tau,\sigma}G$ on the frames of the definite affine spheres, which is called the {\it dressing action\/}.

\rem
 We can consider the dressing action of $g \in \Lambda_{I}^{\sigma,\tau}GL(3,\mathbb{C})$ on $\Lambda_{\mathbb{C}^{\times}}^{\sigma,\tau}SL(3,\mathbb{C})$, since a scaling by $(\det g)^{-1/3}$ can make $g$ lies in $SL(3,\mathbb{C})$ by Proposition 4.2 and scaling does not affect the dressing action. What's more, the differences between rank 1 type $A g_{\a,\ell}(\l)$ and rank 2 type $A m_{\a,\ell}(\l)$, and between $A = diag(d,d^{-1},1)$ and  $diag(d,d^{-1},-1)$ are also scalings. Without loss of generality, we only consider one type of simple element and choose  $A = diag(d,d^{-1},1)$.

\rem \label{r52}
Since
\begin{equation}\label{5a2}
\Lambda_{I}^{\sigma,\tau}GL(3,\mathbb{C}) \cap \Lambda_{\mathbb{C}^{\times}}^{\sigma,\tau}SL(3,\mathbb{C}) = \{diag(e^{i\theta}, e^{-i\theta}, 1) ~|~\theta \in \mathbb{R}\},
\end{equation}
the factorization \eqref{5a} will not be unique and the ambiguity lies in \eqref{5a2}. By Lemma 3.2, we can eliminate the  ambiguity  by requiring  $d>0$ in the matrix $A$, without changing the geometric property of dressing action. We also know that
\[
\Lambda_{I}^{\sigma}GL(3,\mathbb{C}) \cap \Lambda_{\mathbb{C}^{\times}}^{\sigma}SL(3,\mathbb{C}) = \{diag(d,d^{-1}, 1)~|~  d \in \mathbb{C}^{\times}\}.
\]

\ms
\begin{lem} \label{l53}
Let $A g_{\a,\ell}(\l) \in \Lambda_{I}^{\sigma}GL(3,\mathbb{C})$ as in \eqref{4g}, and $E(\l) \in \Lambda_{\mathbb{C}^{\times}}^{\sigma}SL(3,\mathbb{C})$. If $\tilde{\ell}:= \ell E(\a)\nsubseteq\Delta$, then we have
\begin{align} \label{5d}
\tilde{E}(\l):=A g_{\a,\ell} \cdot E(\l) \cdot g_{\a,\tilde{\ell}}^{-1}\tilde{A}^{-1}
\end{align}
lies in $\Lambda_{\mathbb{C}^{\times}}^{\sigma}SL(3,\mathbb{C})$ for any $\tilde{A}=diag\{\tilde{d},\tilde{d}^{-1},1\}$, with arbitrary $\tilde{d}\in \mathbb{C}^{\times}$.
\end{lem}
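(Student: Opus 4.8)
The statement asserts that the "dressed" loop $\tilde{E}(\lambda) = A\,g_{\a,\ell}(\lambda)\,E(\lambda)\,g_{\a,\tilde\ell}(\lambda)^{-1}\tilde A^{-1}$ again lies in $\Lambda^\sigma_{\mathbb{C}^\times}\SL(3,\mathbb{C})$. Since each of the four factors satisfies the $\sigma$-reality condition and $\Lambda^\sigma_{\mathbb{C}^\times}\SL(3,\mathbb{C})$ is a group, the $\sigma$-equivariance $\sigma(\tilde E(\lambda))=\tilde E(\e\lambda)$ and the determinant normalization are automatic; the genuine content is that $\tilde E$ extends holomorphically over all of $\mathbb{C}^\times$. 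The only places where holomorphy could fail are the candidate poles introduced by the two simple elements, namely the $\sigma$-orbits $\{\a,\e^2\a,\e^4\a\}$ (from $g_{\a,\ell}$) and the same orbit again (from $g_{\a,\tilde\ell}^{-1}$, which has poles exactly where $g_{\a,\tilde\ell}$ has zeroes). By the $\sigma_2$-covariance built into formula \eqref{4g}, it suffices to check holomorphy at the single point $\lambda=\a$; the behaviour at $\e^2\a,\e^4\a$ then follows by applying $\sigma$. This is the standard "pole-cancellation" lemma of the Terng--Uhlenbeck dressing method adapted to the $\Z_6$-twisted setting.

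First I would analyze the pole of $A\,g_{\a,\ell}(\lambda)E(\lambda)$ at $\lambda=\a$. From \eqref{4g} the residue of $g_{\a,\ell}$ at $\a$ has rank one with image spanned by $l^t=(b,c,1)^t$ (this is exactly the "$\text{Image}(\text{Res}_\a\,g^t)=\ell^t$" observation recorded before the Notation box), so $\text{Res}_\a\big(A\,g_{\a,\ell}E\big)$ is a rank-one matrix whose rows are proportional to the fixed covector determined by $\ell$. On the other side, $g_{\a,\tilde\ell}(\lambda)^{-1}$ has a simple pole at $\lambda=\a$ whose residue is controlled by the \emph{kernel} data of $g_{\a,\tilde\ell}(\a)$. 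The key algebraic input is the definition $\tilde\ell:=\ell E(\a)$: I expect that the simple pole of the left factor and the simple pole of the right factor cancel \emph{precisely} because the singular direction produced by $g_{\a,\ell}E$ at $\lambda=\a$ is $\ell E(\a)=\tilde\ell$, which is the very line used to build $g_{\a,\tilde\ell}$. Concretely I would compute $\lim_{\lambda\to\a}(\lambda-\a)\,\tilde E(\lambda)$ and show the rank-one residue of $g_{\a,\ell}E$, when multiplied on the right by the holomorphic part of $g_{\a,\tilde\ell}^{-1}$, annihilates against the complementary kernel direction, forcing the residue to vanish. The hypothesis $\tilde\ell\nsubseteq\Delta$ is exactly what guarantees $g_{\a,\tilde\ell}$ is a well-defined simple element (by the Notation box, a line off $\Delta$ together with $\a$ and a diagonal matrix determines it), so that the right factor is legitimate and its residue structure is the one just described.

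After the residue vanishes, $\tilde E$ has at worst a removable singularity at $\lambda=\a$, hence is holomorphic there; applying $\sigma$ propagates this to $\e^2\a$ and $\e^4\a$. Away from the $\sigma$-orbit of $\a$, all four factors are holomorphic and invertible (note $E\in\Lambda_{\mathbb{C}^\times}$ is holomorphic on all of $\mathbb{C}^\times$, and the diagonal factors $A,\tilde A^{-1}$ are constant in $\lambda$), so $\tilde E$ is holomorphic on $\mathbb{C}^\times$ and takes values in $\GL(3,\mathbb{C})$. Finally, the determinant of $\tilde E$ is the product of the determinants of the factors: by Proposition \ref{p42}(1) the determinants of $g_{\a,\ell}$ and of $g_{\a,\tilde\ell}^{-1}$ are reciprocal rational functions $[(\lambda^3+\a^3)/(\lambda^3-\a^3)]^{\pm 1}$ that cancel, while $\det E=1$ and $\det A=\det\tilde A^{-1}=1$; hence $\det\tilde E\equiv 1$ and $\tilde E\in\Lambda^\sigma_{\mathbb{C}^\times}\SL(3,\mathbb{C})$.

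\textbf{Main obstacle.} The technical heart is the residue cancellation at $\lambda=\a$: I must verify that the rank-one singular part of $A\,g_{\a,\ell}(\lambda)E(\lambda)$ is \emph{exactly matched} by the pole of $g_{\a,\tilde\ell}(\lambda)^{-1}\tilde A^{-1}$, and this is precisely where the choice $\tilde\ell=\ell E(\a)$ must be used in an essential way. The delicate point is keeping track of how the holomorphic factor $E(\lambda)$, evaluated at $\lambda=\a$, transports the line $\ell$ to $\tilde\ell$ and thereby aligns the image of the left residue with the kernel direction killed by the right factor; this alignment is what makes the double simple pole collapse to a regular point rather than a higher-order pole. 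The arbitrariness of $\tilde d$ should drop out because $\tilde A^{-1}$ is invertible and diagonal, affecting only an overall holomorphic (indeed constant) gauge that cannot create or remove poles.
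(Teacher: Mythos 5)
Your overall strategy (residue cancellation driven by $\tilde{\ell}=\ell E(\a)$, plus the determinant cancellation from Proposition \ref{p42}) is the right one, and your treatment of the singularity at $\l=\a$ contains the paper's key idea. But there is a genuine gap caused by a bookkeeping error about where the second factor is singular. You assert that $g_{\a,\tilde{\ell}}^{-1}$ has its poles on ``the same orbit'' $\{\a,\e^2\a,\e^4\a\}$, so that the two factors contribute a ``double simple pole'' at $\a$ which must collapse. In fact $\det g_{\a,\tilde{\ell}}(\l)=(\l^3+\a^3)/(\l^3-\a^3)$, so the zeros of $g_{\a,\tilde{\ell}}$ --- equivalently, by the $\sigma_1$-identity $g_{\a,\tilde{\ell}}(\l)^{-1}=P_{12}\,g_{\a,\tilde{\ell}}(-\l)^t\,P_{12}$, the poles of its inverse --- sit on the \emph{opposite} half-orbit $\{-\a,-\e^2\a,-\e^4\a\}$. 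Hence at $\l=\a$ only the left factor is singular: the residue is the single product $2\a\,R\,E(\a)\,P_{12}\,g_{\a,\tilde{\ell}}(-\a)^t\tilde{A}^t$, which vanishes because $\ell E(\a)=\tilde{\ell}$ and $g_{\a,\tilde{\ell}}(-\a)\,P_{12}\,\tilde{\ell}^t=0$. There is then a second, independent family of candidate poles at $-\a,-\e^2\a,-\e^4\a$ that your argument never addresses, and the second-order-pole computation you set up at $\a$ is aimed at a singularity that is not actually there.

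The residue at $-\a$ is not a formal repetition of the one at $\a$: it equals $-2\a\,A g_{\a,\ell}(-\a)\,E(-\a)\,P_{12}\,\tilde{R}^t P_{12}$, and its vanishing needs the $\sigma_1$-reality of $E$ itself, namely $E(-\a)\,P_{12}\,E(\a)^t=P_{12}$, which transports $P_{12}\tilde{\ell}^t=P_{12}E(\a)^t\ell^t$ back to $P_{12}\ell^t$, annihilated by $g_{\a,\ell}(-\a)$. This input nowhere appears in your proposal. (One could instead deduce holomorphy at $-\a=\e^3\a$ from holomorphy at $\a$ by iterating the full order-$6$ relation $\tilde{E}(\e\l)=P(\tilde{E}(\l)^t)^{-1}P^{-1}$ together with $\det\tilde{E}\equiv 1$, but that is not the propagation you describe: applying the order-$3$ conjugation $\sigma_2$ only covers $\e^2\a$ and $\e^4\a$, never the points $-\e^{2k}\a$.) As written, half of the singular set is unaccounted for.
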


\begin{proof}
Since $\tilde{E}(\l)$ satisfied the $\sigma$-reality condition and holomorphic in $\mathbb{C}^{\times}$ except for possible simple poles coming from the poles of $A g_{\a,\ell}$ and $g_{\a,\tilde{\ell}}^{-1}\tilde{A}^{-1}$, we only need to prove that residues of $\tilde{E}(\l)$ are zero at both $\a$ and $-\a$. But
\[
 \sigma_1(A g_{\a,\ell}(\l)) = A g_{\a,\ell}(-\l) \Longleftrightarrow P_{12} = (A g_{\a,\ell}(\l)) \; P_{12} \;A (g_{\a,\ell}(-\l))^t ,
\]
whose residue is zero at $\a$ implies that $\ell \; P_{12}\;(g_{\a,\ell}(-\a))^t=0$, or equivalently $(g_{\a,\ell}(-\a)) \;P_{12}\;\ell^t=0$. These two equations are also true for $\tilde{\ell}$. Therefore $(b_1,c_1,1)E(\a) \in \tilde{\ell}$ and the special form of R in \eqref{4f2} imply that
\[
Res_\a \tilde{E} = 2 \a \;R \;E(\a) \; P_{12} \; g_{\a,\tilde{\ell}}(-\a)^t \tilde{A}^t\; =0,
\]
and $E(-\a)\;P_{12}\;(\tilde{b}_1,\tilde{b}_2,1)^t \in [E(-\a)\;P_{12}\;E(\a)^t]\ell^t = P_{12}\;\ell^t$ implies that
\[
Res_{-\a} \tilde{E} = -2 \a\; A g_{\a,\ell}(-\a)\;E(-\a) \; P_{12} \; \tilde{R}^t \;P_{12}=0,
\]
The proof is completed once we notice that det$ \tilde {E} = 1$ by Proposition 4.2.
\end{proof}

When we impose  the $\tau$-reality condition in the following, the above $\tilde{d}$  will be uniquely determined by requiring positiveness  (see Remark \ref{r52}). Now we begin to compute the dressing actions of two types of  elements in Theorem 4.3.

For the first type $|\a| = 1$. We consider the simple element $A m_{\a,\ell}$ for hyperbolic case($\tau = \tau_1$) first. Let $E(\l) \in \Lambda_{\mathbb{C}^{\times}}^{\sigma,\tau_1}SL(3,\mathbb{C})$, by Lemma \ref{l53}, we get
\[
\tilde{E}(\l):=A m_{\a,\ell} \cdot E(\l) \cdot m_{\a,\tilde{\ell}}^{-1}\tilde{A}^{-1} \in  \Lambda_{\mathbb{C}^{\times}}^{\sigma}SL(3,\mathbb{C}).
\]
here $\tilde{\ell} = \ell E(-\a)$. Since $A m_{\a,\ell}$ satisfies $\tau$-reality condition, \eqref{4o} implies $\ell P_{12} = \bar \ell$. For $\tilde{\ell}$, we also have
\[
\tilde{\ell} P_{12} = \ell E(-\a) P_{12}  = \bar \ell \bar E(-\a) = \bar{\tilde{\ell}}.
\]
So we only need to choose $|\tilde{d}|^2 = 2|\tilde{b}|^2-1$ to get $\tilde{A} m_{\a,\tilde{\ell}_1}(\l) \in \Lambda_{I}^{\sigma, \tau_1}GL(3,\mathbb{C})$. Then we have $\tilde{E} \in \Lambda_{\mathbb{C}^{\times}}^{\sigma.\tau_1}SL(3,\mathbb{C})$.
This leads to the following Theorem:
\bthm \label{t54}
Given any hyperbolic  affine sphere $r$,  there is a family of hyperbolic  affine spheres $r_{\l}$ with the local affine frames $E(z,\bar z, \l) \in \Lambda_{\mathbb{C}^{\times}}^{\sigma.\tau_1}SL(3,\mathbb{C})$. We normalize this affine frames at (0,0), i.e., $E(0,0 ,\l)=I$.  Pick any simple element $A m_{\a,\ell} \in \Lambda_{I}^{\sigma, \tau_1}GL(3,\mathbb{C})$. If  $\ell E(z, \bar z ,-\a)\nsubseteq\Delta$ and $2|\tilde{b}|^2-1 > 0$ while $\tilde{b}$ is determined by $\tilde{\ell}:= \ell E(z,\bar z,-\a) \parallel (\tilde{b}, \bar{\tilde{b}},1)$. We define $\tilde{A}=diag\{\tilde{d},\tilde{d}^{-1},1\}$ and $\tilde{d}=\sqrt{2|\tilde{b}|^2-1}$, and get the factorization:
\[
A m_{\a,\ell}  \cdot E(z,\bar z, \l) =\tilde{E}(z,\bar z,\l) \cdot \tilde{A}(z,\bar z ) m_{\a,\tilde{\ell}(z,\bar z)} \in
\Lambda_{\mathbb{C}^{\times}}^{\sigma.\tau_1}SL(3,\mathbb{C}) \times \Lambda_{I}^{\sigma, \tau_1}GL(3,\mathbb{C}).
\]
From the new affine frame $\tilde{E}(z,\bar z,\l)$, we can get explicit formula of new definite affine sphere:

\begin{equation}\label{5d2}
\hat{r} =\frac{-H(\l^3+\a^3)re^{\psi}-4\a^3(ln\phi)_{\bar z}r_z-4\l^3(ln\phi)_zr_{\bar z}}{(\l^3+\a^3)e^{\psi}},
\end{equation}
where $\phi:=(b,\; \bar b,\; 1)r_{-\a} $ is a scale solution of \eqref{eqla} with parameter $-\a$. This new definite affine sphere has new affine metric $\tilde{d}^2 \cdot e^\psi$ and the same affine cubic form (and the same affine mean curvature).

For the elliptic affine sphere, we have the similar results. Only need to change $-\a$ to $\a$ and a scaling for \eqref{5d2} by $\frac{\l^3-\a^3}{\l^3+\a^3}$.
\ethm

\begin{proof}
Since $\ell E(z,\bar z,-\a) \parallel (\tilde{b}, \bar{\tilde{b}},1)$, we get
\begin{align*}
&(b,\; \bar b, \;1) \;\;  \bpm \frac{1}{-\a}\sqrt{-2H}e^{\frac{-\psi}{2}}(r_{-\a})_{z},&-\a \sqrt{-2H}e^{\frac{-\psi}{2}}(r_{-\a})_{\bar z},&-H r_{-\a}\epm \notag \\
=&\bpm \frac{1}{-\a}\sqrt{-2H}e^{\frac{-\psi}{2}}\phi_{z},&-\a \sqrt{-2H}e^{\frac{-\psi}{2}}\phi_{\bar z},&-H \phi\epm
\end{align*}
where $\phi:=(b,\; \bar b,\; 1)r_{-\a} $ is a scale solution of \eqref{eqla} with parameter $-\a$.
The third column of $\tilde{E}(z,\bar z,\l)$ gives new hyperbolic affine sphere, so does the affine transformation of it by $m_{\a,\ell}(\l)^{-1} \; A^{-1}$:
\begin{align*}
\hat{r}&=m_{\a,\ell}(\l)^{-1} \; A^{-1} \;(\tilde{E}(\l))_3 \notag\\
&= (E(\l) \; m_{\a,\tilde{\ell}}(\l)^{-1} \; \tilde{A}^{-1})_3 \notag\\
&= \bpm \frac{1}{\l}\sqrt{-2H}e^{\frac{-\psi}{2}}r_{z},&\l \sqrt{-2H}e^{\frac{-\psi}{2}}r_{\bar z},&-H r\epm
\cdot \frac{1}{\l^3+\a^3} \cdot \bpm 2\a^3\l\frac{\sqrt{-2H}e^{-\frac{\psi}{2}}\phi_{\bar z}}{H \phi} \\2\l^2\frac{\sqrt{-2H}e^{-\frac{\psi}{2}}\phi_{ z}}{H \phi} \\ \l^3+\a^3 \epm \notag\\
&=\frac{-H(\l^3+\a^3)re^{\psi}-4\a^3(ln\phi)_{\bar z}r_z-4\l^3(ln\phi)_zr_{\bar z}}{(\l^3+\a^3)e^{\psi}}.
\end{align*}
Recall the discussion in the beginning of this section, $\tilde E^{-1}\tilde E_{z}$ be expressed as:
\begin{align*}
\tilde E^{-1}\tilde E_{z}&=\tilde{A}  m_{\a,\tilde{\ell} } (u_{0}+\l u_{1}) (\tilde{A}  m_{\a,\tilde{\ell} })^{-1}+(\tilde{A}  m_{\a,\tilde{\ell} })_{z}(\tilde{A}  m_{\a,\tilde{\ell} })^{-1}. \\
&=\tilde u_{0}+\l \tilde u_{1}.
\end{align*}
We can get $\tilde u_1$ by
\begin{align*}
\tilde u_1
&=\lim_{\l\rightarrow \infty}\frac{\tilde E^{-1}\tilde E_{z}}{\l} \\
&=\lim_{\l\rightarrow \infty}\tilde{A}  m_{\a,\tilde{\ell} }\  u_{1}\ (\tilde{A}  m_{\a,\tilde{\ell} })^{-1}\\
&= \tilde A \ u_1 \ P_{12}\tilde A P_{12}\\
&= \bpm 0 & 0 & \frac{\sqrt{-2H}}{2} \tilde{d} h^{\frac 12} \\ U \tilde{d}^{-2}h^{-1} & 0 & 0 \\ 0 & \frac{\sqrt{-2H}}{2} \tilde{d}h^{\frac 12} & 0 \epm.
\end{align*}
Compare $\tilde u_1$ with the coefficient matrix of $\lambda$ in \eqref{eqla}, it is easy to see that this new definite affine sphere has new affine metric $\tilde{d}^2 \cdot e^\psi$ and the same affine cubic form (and the same affine mean curvature).
\end{proof}
The formula \eqref{5d2} is similar to the classical Tzitz\'{e}ica transformation of indefinite affine spheres.

Now we consider the dressing action of the second type $h(\l)=A g_{\a,\ell_1}(\l)  g_{\bar{\a}^{-1},\ell_2}(\l) \in \Lambda_{I}^{\sigma, \tau}GL(3,\mathbb{C})$  in Theorem \ref{t43} on $E \in \Lambda_{\mathbb{C}^{\times}}^{\sigma.\tau}SL(3,\mathbb{C})$. We just need to apply Lemma \ref{l53} twice:
\[
\hat{E}(\l):= g_{\bar{\a}^{-1},\ell_2}(\l) \cdot E(\l) \cdot g^{-1}_{\bar{\a}^{-1},\tilde{\ell}_2}(\l) \in \Lambda_{\mathbb{C}^{\times}}^\s SL_3\C
\]
\[
\tilde{E} (\l):=A g_{\a,\ell_1}(\l)g_{\bar{\a}^{-1},\ell_2}(\l) \cdot E(\l) \cdot g^{-1}_{\bar{\a}^{-1},\tilde{\ell}_2}(\l) g^{-1}_{\a,\tilde{\ell}_1}(\l) \tilde{A}^{-1}\in \Lambda_{\mathbb{C}^{\times}}^\s SL_3\C\\
\]
where
$$
\ell_1 = \C \bpm  b_1 \\  c_1\\ 1 \epm^t, \ell_2=\C \bpm  b_2 \\  c_2 \\ 1 \epm^t,
$$
$$
A=diag(d,d^{-1},1),
$$
and $\tilde{\ell}_1,\tilde{\ell}_2$ satisfying
\begin{align*}
\tilde{\ell}_2 = \ell_2 E(\bar{\a}^{-1}), \qquad\qquad\qquad\quad\\
\tilde{\ell}_1 = \ell_1 \hat{E}(\a) = \ell_1 g_{\bar \a^{-1}, \ell_{2}}(\a) E(\a) g^{-1}_{\bar \a^{-1}, \tilde{\ell}_{2}}(\a),
\end{align*}
or
\beq \label{5e}
\ell_2 = \tilde{\ell}_2 E^{-1}(\bar \a^{-1}),
\eeq
\beq \label{5f}
\ell_1 = \tilde{\ell}_1  g_{\bar \a^{-1}, \tilde{\ell}_2}(\a)E^{-1}(\a)g^{-1}_{\bar \a^{-1},\ell_2}(\a).
\eeq
Since $h(\l)$ satisfies $\tau$-reality condition, from \eqref{4u2}, we have
$$
\bar \ell_2  \ P_{12}\ T \ g^t_{\bar \a^{-1}, \ell_2}(-\a)=\ell_1 P_{12},
$$
or
\beq \label{5g}
\ell_1 g_{\bar \a^{-1},\ell_2}(\a) = \bar \ell_2 T.
\eeq
Substitute $\ell_1$, $\ell_2$ in \eqref{5e}-\eqref{5f} into \eqref{5g}, and we get
\beq \label{5h}
\tilde{\ell}_1  g_{\bar \a^{-1},\tilde{\ell}_2}(\a) = \bar{\tilde{\ell}}_2 T.
\eeq

Compare \eqref{5g} with \eqref{5h}, we can find that the relationship between $\tilde{\ell}_1$ and $\tilde{\ell}_2$ satisfies \eqref{4x}-\eqref{4y}. Only need to choose $\tilde{d}$ satisfies \eqref{4w}, we get $\tilde{h}(\l)=\tilde{A} g_{\a,\tilde{\ell}_1}(\l) g_{\bar{\a}^{-1},\tilde{\ell}_2}(\l) \in \Lambda_{I}^{\sigma, \tau}GL(3,\mathbb{C})$. Then we have $\tilde{E} \in \Lambda_{\mathbb{C}^{\times}}^{\sigma.\tau}SL(3,\mathbb{C})$.

\ms
  Summarizing the above computations, we have the following theorem.
  \bthm \label{t55}
  Given any definite affine sphere $r$,  there is a family of definite affine spheres $r_{\l}$ with the local affine frames $E(z,\bar z, \l) \in \Lambda_{\mathbb{C}^{\times}}^{\sigma.\tau}SL(3,\mathbb{C})$ . We normalize this affine frames at (0,0), i.e., $E(0,0 ,\l)=I$. Pick $h(\l)=A g_{\a,\ell_1}(\l) g_{\bar \a^{-1},\ell_2}(\l)\in\Lambda^{\sigma,\tau}_{I} GL(3,\C)$. If both $\tilde{\ell}_2: = \ell_2 E(\bar{\a}^{-1}) \parallel (\tilde{b}_2,\tilde{c}_2,1)$ and $\tilde{\ell}_1 = \ell_1 g_{\bar \a^{-1},  \ell_{2}}(\a) E(\a) g^{-1}_{\bar \a^{-1}, \tilde{\ell}_{2}}(\a) \parallel (\tilde{b}_1,\tilde{c}_1,1)$ are not in $\Delta$ and $(2 \tilde{b}_1 \tilde{c}_1-1)(2 \tilde{b}_2 \tilde{c}_2-1)>0$, we define $\tilde{A}=diag\{\tilde{d},\tilde{d}^{-1},1\}$ and $\tilde{d}=\sqrt{(2 \tilde{b}_1 \tilde{c}_1-1)(2 \tilde{b}_2 \tilde{c}_2-1)}$, and get the factorization:
\begin{equation} \label{5k}
 A g_{\a,\ell_1}(\l) g_{\bar{\a}^{-1},\ell_2}(\l) \cdot E(z,\bar z,\l)
=   \tilde{E} (z,\bar z,\l)\cdot \tilde{A}(z,\bar z) g_{\a,\tilde{\ell}_1(z,\bar z)}(\l)g_{\bar{\a}^{-1},\tilde{\ell}_2(z,\bar z)}(\l),
\end{equation}
here $\tilde{E} (z,\bar z,\l) \in \Lambda_{\mathbb{C}^{\times}}^{\sigma, \tau} SL(3,\C)$, $\tilde{A}(z,\bar z) g_{\a,\tilde{\ell}_1(z,\bar z)}(\l)g_{\bar{\a}^{-1},\tilde{\ell}_2(z,\bar z)}(\l) \in \Lambda_{I}^{\sigma, \tau} GL(3,\C) $.

From the new affine frame $\tilde{E}(z,\bar z,\l)$, we can get explicit formula of new definite affine sphere
\begin{align*}
\tilde{r}&=N\frac{\a\l^3 \tilde{b}_1 \ \{-2\sqrt{-2H}h^{-\frac 12}[\l^3 \bar{\a}^3 (2\tilde{b}_2\tilde{c}_2-1)-1]\ r_{\bar z} +
   4\sqrt{-2H}h^{-\frac 12} \bar{\a}^2 \tilde{c}_2^2 \ r_z +
   4H\bar \a \tilde{c}_2  \ r \}}
   {(\l^3 \bar{\a}^3+1)(2\tilde{b}_2\tilde{c}_2-1)(\l^3+\a^3)}\\
   &+N\frac{\a^2 \tilde{c}_2 \ [4\sqrt{-2H}h^{-\frac 12} \bar{\a} \l^3 \tilde{b}_2^2 \  r_{\bar z}
   +2\sqrt{-2H}h^{-\frac 12} (\l^3 \bar{\a}^3+1-2 \tilde{b}_2\tilde{c}_2) \ r_z
   +4H\bar{\a}^2\l^3\tilde{b}_2 \ r]}
   {(\l^3 \bar{\a}^3+1)(\l^3+\a^3)}\\
   &+N\frac{(-\l^3+\a^3)[2\sqrt{-2H}h^{-\frac 12} \bar{\a}^2 \l^3 \tilde{b}_2 \ r_{\bar z}
    -2\sqrt{-2H}h^{-\frac 12} \bar{\a} \tilde{c}_2 \ r_z
    +H (\l^3 \bar{\a}^3-1) \ r]}
   {(\l^3 \bar{\a}^3+1)(\l^3+\a^3)}
\end{align*}
here
\[
 N=\begin{pmatrix} \frac{1}{\sqrt{2}} & \frac{1}{\sqrt{2}} & 0 \\ \frac{\mathrm{i}}{\sqrt{2}} & \frac{-\mathrm{i}}{\sqrt{2}}& 0 \\ 0 & 0 & \sqrt{H/2} \end{pmatrix}
 \]
This new definite affine sphere has new affine metric $\tilde{d}^2 \cdot e^\psi$ and the same affine cubic form (and the same affine mean curvature).

\ethm
\begin{proof}
By Remark \ref{rem-reframe}, we have
\begin{align*}
\hat{r}&=( N [A g_{\a,\ell_1}(\l)g_{\bar{\a}^{-1},\ell_2}(\l)]^{-1} \tilde{E}(\l) N^{-1})_3 \\
         &= (N E(\l) g_{\bar{\a}^{-1},\tilde{\ell}_2}(\l)^{-1} g_{\a,\tilde{\ell}_1}(\l)^{-1}\tilde{A}_1^{-1})_3 \\
         &= (N E(\l) \ P_{12} \ g_{\bar{\a}^{-1},\tilde{\ell}_2}(-\l)^{t} g_{\a,\tilde{\ell}_1}(-\l)^{t} \tilde{A}_1^t)_3 \\
         &= N \,(\l\sqrt{-2H}h^{-\frac 12}r_{\bar z}, \frac{1}{\l}\sqrt{-2H}h^{-\frac 12}r_z, -Hr) \\
        & \quad \cdot
          \left( \begin {array}{ccc}
           \frac{\l^3 \bar{\a}^3 (2\tilde{c}_1\tilde{c}_2-1)-1}{(\l^3 \bar{\a}^3+1)(2\tilde{c}_1\tilde{c}_2-1)}&
           \frac {2 \bar \a \l \tilde{c}_1^2}{\l^3 \bar{\a}^3+1}&
           \frac {-2 \bar{\a}^2 \l^2 \tilde{c}_1}{\l^3 \bar{\a}^3+1} \\
           \noalign{\medskip}
           \frac {-2 \bar{\a}^2 \l^2 \tilde{c}_2^2}{(\l^3 \bar{\a}^3+1)(2\tilde{c}_1\tilde{c}_2-1)}&
           \frac {\l^3 \bar{\a}^3+1-2 \tilde{c}_1\tilde{c}_2}{\l^3 \bar{\a}^3+1}&
           \frac {2\bar{\a}\l\tilde{c}_2}{\l^3 \bar{\a}^3+1}\\
           \noalign{\medskip}
           \frac {2 \bar{\a} \l \tilde{c}_2}{(\l^3 \bar{\a}^3+1)(2\tilde{c}_1\tilde{c}_2-1)}&
           \frac {-2\bar{\a}^2\l^2\tilde{c}_1}{\l^3 \bar{\a}^3+1}&
           \frac {\l^3 \bar{\a}^3-1}{\l^3 \bar{\a}^3+1}
           \end {array}\right)
 \cdot
  \left( \begin {array}{c}
  \frac{-2 \a \l^2 \tilde{b}_1}{\l^3+\a^3} \\
  \noalign{\medskip}
  \frac{2 \a^2  \l \tilde{b}_2}{\l^3+\a^3} \\
  \noalign{\medskip}
  \frac{\l^3-\a^3}{\l^3+\a^3}
  \end {array}\right) \\
   &=N\frac{\a\l^3 \tilde{b}_1 \ \{-2\sqrt{-2H}h^{-\frac 12}[\l^3 \bar{\a}^3 (2\tilde{c}_1\tilde{c}_2-1)-1]\ r_{\bar z} +
   4\sqrt{-2H}h^{-\frac 12} \bar{\a}^2 \tilde{c}_2^2 \ r_z +
   4H\bar \a \tilde{c}_2  \ r \}}
   {(\l^3 \bar{\a}^3+1)(2\tilde{c}_1\tilde{c}_2-1)(\l^3+\a^3)}\\
   &+N\frac{\a^2 \tilde{b}_2 \ [4\sqrt{-2H}h^{-\frac 12} \bar{\a} \l^3 \tilde{c}_1^2 \  r_{\bar z}
   +2\sqrt{-2H}h^{-\frac 12} (\l^3 \bar{\a}^3+1-2 \tilde{c}_1\tilde{c}_2) \ r_z
   +4H\bar{\a}^2\l^3\tilde{c}_1 \ r]}
   {(\l^3 \bar{\a}^3+1)(\l^3+\a^3)}\\
   &+N\frac{(-\l^3+\a^3)[2\sqrt{-2H}h^{-\frac 12} \bar{\a}^2 \l^3 \tilde{c}_1 \ r_{\bar z}
    -2\sqrt{-2H}h^{-\frac 12} \bar{\a} \tilde{c}_2 \ r_z
    +H (\l^3 \bar{\a}^3-1) \ r]}
   {(\l^3 \bar{\a}^3+1)(\l^3+\a^3)}.
\end{align*}
\end{proof}

\section {Basic examples}
In this section, we will apply our results to construct some basic examples.
\ms

\noindent\textbf{Example 6.1} (The \textbf{vacuum} solution).
Assume $H =-2$ and $U =1$ in the equation \eqref{eqgc} for hyperbolic definite affine
spheres. Then it admits the trivial solution $\psi=0$ (also called the vacuum solution), and the corresponding
surface is $x_1x_2x_3 = \frac{\sqrt{3}}{72}$. One can integrate \eqref{eqla} to obtain the whole family
of frames. Note that the surface is independent of the parameter $\l$. So the
associated family is really a family of parameterizations of the same affine
sphere.

We may choose the following conformal parametrization of the
vacuum definite affine sphere after certain affine transformation:
$$X =\frac{\sqrt{3}}{6} \bpm R(\l) \\ R(\epsilon^{2}\l) \\  R(\epsilon^{4}\l)\epm,$$
where
  $$\epsilon=e^{\frac{\pi i}{3}}, \;R(\l)=\exp(\l z+\frac{1}{\l}\bar z).$$
Then
\begin{align*}
F(z,\bar z,\l)=&\bpm \frac{1}{\l}\sqrt{-2H}e^{\frac{-\psi}{2}}X_{z},&\l\sqrt{-2H}e^{\frac{-\psi}{2}}X_{\bar z},&-HX\epm\\
=\frac{\sqrt{3}}{3} & \bpm R(\l) & R(\l) & R(\l)\\  \epsilon^{2}R(\epsilon^{2}\l) & \epsilon^{4}R(\epsilon^{2}\l) & R(\epsilon^{2}\l)\\ \epsilon^{4}R(\epsilon^{4}\l) & \epsilon^{2}R(\epsilon^{4}\l)& R(\epsilon^{4}\l)\epm
\end{align*}
Normalize this frame in a neighborhood of 0, we get
\begin{align} \label{6a1}
E(z,\bar z,\l):=F^{-1}(0,0,\l)F(z,\bar z,\l) =  exp(\l P_{132}\cdot z+\frac{1}{\l} P^t_{132} \cdot \bar z),
\end{align}
where
 \begin{align*}
P_{132} = \bpm 0 & 0 &1 \\  1& 0 & 0\\ 0 & 1 & 0\epm .
 \end{align*}

\noindent\textbf{Example 6.2} (The one `soliton' solution).
Choose simple element $A g_{\a,\ell}$ with $\a=i, \ell=\C(-\frac{1}{2}+i,-\frac{1}{2}-i,1)$, and $E(z,\bar z,\l)$ is the normalized frame of vacuum definite affine sphere in \eqref{6a1}.
Then we have the factorization:
 \begin{align*}
A g_{\a,\ell}(\l)E(\l)=\tilde{E}(\l)\tilde A g_{\a,\tilde \ell}(\l),
\end{align*}
where
\begin{align*}
\tilde \ell=\ell E(\a)=\C(\tilde b,\bar{\tilde b}, 1),
\end{align*}
\begin{align*}
\tilde{b}=\frac{(-\frac 34-\frac{\sqrt{3}}{2}+\frac 32 i +\frac {3\sqrt{3}}{4} i)e^{y-\sqrt{3}x}+(-\frac 34+\frac{\sqrt{3}}{2}+\frac 32 i -\frac {3\sqrt{3}}{4} i)e^{y+\sqrt{3}x}}
{(\frac 32-\sqrt{3})e^{y-\sqrt{3}x}+(\frac 32+\sqrt{3})e^{y+\sqrt{3}x}},
\end{align*}
\begin{align*}
\tilde{d}=\sqrt{2 |\tilde{b}|^2-1}=\sqrt{-\frac{3 [(4\sqrt{3}-7)e^{4\sqrt{3}x}- 4e^{2\sqrt{3}x}-4\sqrt{3}-7]}
{[(2\sqrt{3}-3)e^{2\sqrt{3}x}- (2\sqrt{3}+3)]^2}}.
\end{align*}
By Theorem \ref{t54}, we can get the solution of Tzitz\'{e}ica equation \eqref{eqgc} (new affine metric):
 \begin{align} \label{6b}
 h=e^{\tilde \psi}=\tilde d^2
 =\frac{3 [(7-4\sqrt{3})e^{4\sqrt{3}x}+ 4e^{2\sqrt{3}x}+4\sqrt{3}+7]}
{[(2\sqrt{3}-3)e^{2\sqrt{3}x}- (2\sqrt{3}+3)]^2}.
  \end{align}
Recall the one soliton type solution of Tzitz\'{e}ica equation in \cite{Kap97}:
\begin{equation}\label{6c}
h = 1 - 2 (ln \tau )_{z \bar z}, ~with ~ \tau = 1- e^{k z+\frac{3\bar z}{k}+s}.
\end{equation}
Here we adopt the usual notation $\tau$ for $\tau$-function, not the same as the reality condition before.
After some computation, we can see our solution \eqref{6b} is coincide with \eqref{6c} with $k = \sqrt{3}$ and $ s = \ln(\frac{2\sqrt{3}-3}{3+2\sqrt{3}})$. We give the picture new solution in Figure 1.

Recall a complete hyperbolic affine sphere in \cite{Roland Hildebrand.{2013}}:
\begin{align} \label{6cas}
 \mathbb{R}^2_{++} \ni \bpm x \\  y \epm \rightarrow  \vec{r} = \bpm r_1 \\  r_2 \\ r_3 \epm
 = \bpm -y(\frac{3}{2}\ln(x)+3\ln(y)+x) \\x^{-\frac{3}{2}}y^{-2}\sqrt{1+x} \\ y \epm,
\end{align}
which is asymptotic to the boundary of the cone obtained by the homogenization of the epigraph of the exponential function. To compare with this hyperbolic affine sphere, we use the conformal parametrization in \cite{lin Wang}:
\begin{align*}
\vec{r'} = \bpm \frac{1}{\sqrt{3}}\sinh^{-1}( \sqrt{3}x) (\sinh^2( \sqrt{3} x)+3 y) e^{y}\\
 -\frac{1}{\sqrt{3}}\sinh^{-1}( \sqrt{3} x) \sqrt{1+\sinh^2(\sqrt{3} x)} e^{-2y}\\
   -\frac{1}{\sqrt{3}}\sinh^{-1}(\sqrt{3} x) e^{y} \epm.
\end{align*}
with the affine metric, affine mean curvature and affine cubic form are:

\begin{equation}\label{6inv}
g = \frac{3}{2}(\mathrm{csch}( \sqrt{3}x )^2+\frac{2}{3})(dz~ d\bar z),~~~ H = -2, ~~~ C = I\cdot (dz^3+d \bar z^3).
\end{equation}

We only need to make a  motion of variable $x$ by : $x \rightarrow x- \frac{s}{2\sqrt{3}}$ and choose $\l = i$ in our case. Then we get an definite affine sphere with the same invariants with  \eqref{6cas}, i.e. one of the family of definite affine sphere  \eqref{5d2} is a complete hyperbolic affine sphere. Since the specific formula is too long to place in this paper, we give the picture of this affine sphere instead.

 \begin{figure}[!htb]
\begin{center}
{
\includegraphics[width=0.5 \columnwidth]{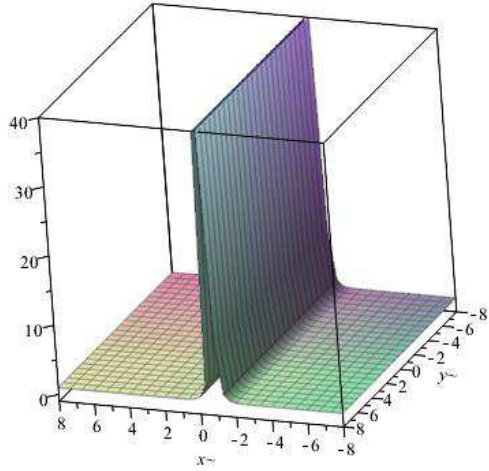}
}
\caption{$-8<x<8,-8<y<8$}
\end{center}
\end{figure}
 \begin{figure}[!htb]
\begin{center}
{
\includegraphics[width=0.5 \columnwidth]{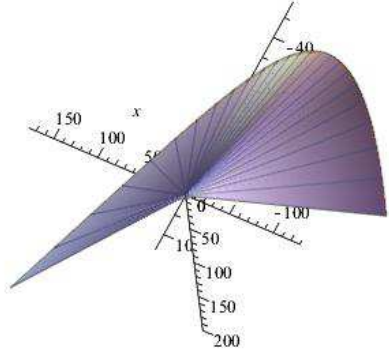}
}
\caption{$-2<x<2,-2<y<2$}
\end{center}
\end{figure}
\noindent\textbf{Example 6.3} (The two `soliton' solution). There are two cases.\\
(1) Choose simple element $A m_{\a,\ell}$, where $\a=i, \ell=\C(1+i,1-i,1)$, and $E(z,\bar z,\l)$ is \eqref{6a1}. Then we have the factorization
 \begin{align*}
A m_{\a,\ell}(\l)E(\l)=\tilde{E}(\l)\tilde A m_{\a,\tilde \ell}(\l),
\end{align*}
where
\begin{align*}
\tilde \ell=\ell E(-\a)=\C(\tilde b,\bar{\tilde b}, 1),
\end{align*}
\begin{align*}
\tilde{b}=\frac{(-3i+\sqrt{3})e^{\sqrt{3}x-y}+(-3i-\sqrt{3})e^{-\sqrt{3}x-y}-6e^{-2y}}
{-2(\sqrt{3}e^{\sqrt{3}x-y}-\sqrt{3}e^{-\sqrt{3}x-y}+3e^{2y})},
\end{align*}
\begin{align*}
\tilde{d}=\sqrt{2 |\tilde{b}|^2-1}=\sqrt{\frac{3(3e^{4y}-4e^{-2y}-4\sqrt{3}e^{\sqrt{3}x+y}+4\sqrt{3}e^{-\sqrt{3}x+y}+e^{2\sqrt{3}x-2y}+e^{-2\sqrt{3}x-2y})}
{(\sqrt{3}e^{\sqrt{3}x-y}-\sqrt{3}e^{-\sqrt{3}x-y}+3e^{2y})^2}}.
\end{align*}
By Theorem \ref{t54}, we can get the solution of Tzitz\'{e}ica equation \eqref{eqgc} (new affine metric):
 \begin{align*}
 &h=e^{\tilde \psi}=\tilde d^2\\
 &=\frac{3(3e^{4y}-4e^{-2y}-4\sqrt{3}e^{\sqrt{3}x+y}+4\sqrt{3}e^{-\sqrt{3}x+y}+e^{2\sqrt{3}x-2y}+e^{-2\sqrt{3}x-2y})}
{(\sqrt{3}e^{\sqrt{3}x-y}-\sqrt{3}e^{-\sqrt{3}x-y}+3e^{2y})^2}.
  \end{align*}
Recall the  two soliton type solution of Tzitz\'{e}ica equation in \cite{Kap97}:
\begin{equation*}
h = 1 - 2 (ln \tau )_{z \bar z},
\end{equation*}
with
\begin{equation}\label{6d}
\tau = 1+ e^{k_1z+\frac{3\bar z}{k_1}+s_1}+e^{k_2z+\frac{3\bar z}{k_2}+s_2}+\frac{(k_1-k_2)^2(k_1^2-k_1k_2+k_2^2)}{(k_1+k_2)^2(k_1^2+k_1k_2+k_2^2)} e^{(k_1+k_2)z+(\frac{3}{k_1}+\frac{3}{k_2})\bar z+s_1+s_2}
\end{equation}
Then we get the $\tau$-function of the new solution $h$:
\[
\tau = 1 + \frac{\sqrt{3}}{3} e^{\sqrt{3}x-3y} - \frac{\sqrt{3}}{3} e^{-\sqrt{3}x-3y}.
\]
It is coincide with the $\tau$-functions \eqref{6d} with parameters
$k_1=\frac{\sqrt{3}-3 i}{2}$, $k_2=\frac{-\sqrt{3}-3 i}{2}$, $s_1=ln(\frac{\sqrt{3}}{3})$, $s_2=ln(-\frac{\sqrt{3}}{3})$.

We give the picture of this solution in Figure 3. Notice in Figure 3, the solution goes below the red plane (z=0) only in the very small region, which means the metric become negative in a curve. It is easy to see that Tzitz\'{e}ica equation \eqref{eqgc} equals to
\beq \label{6h}
h_{z \bar z}h- h_z h_{\bar z} - h^3 +1=0.
\eeq
and the negative parts are also the solution of \eqref{6h}. From geometric point of view, if we fix the affine mean curvature $H=-2$, h is positive means the affine metric is positive definite, then it gives a local hyperbolic affine sphere; h is negative means the affine metric is negative definite, it gives a local elliptic affine sphere.

So the solution  $h=e^{\tilde \psi}$ can be extended to be a global solution of  Tzitz\'{e}ica equation \eqref{6b} and  we get a global mixed definite affine sphere. Some parts of it are hyperbolic type, the other are elliptic.

 We also use the formula \eqref{5d2} to get the new definite affine sphere:
\[
\tilde{X}=\frac{-H(\l^3+\a^3)Xe^{\psi}-4\a^3(ln\phi)_{\bar z}X_z-4\l^3(ln\phi)_zX_{\bar z}}{(\l^3+\a^3)e^{\psi}}
\]
here
\[
\phi=\ell F^{-1}_{\l}(0,0) X(-\a)=\frac 12 \,{{\rm e}^{2\,y}}+\frac 16\,{{\rm e}^{-\frac 13\,\sqrt {3} \left( \sqrt {3}y
-3\,x \right) }}\sqrt {3}-\frac 16\,{{\rm e}^{-\frac 13\,\sqrt {3} \left( \sqrt
{3}y+3\,x \right) }}\sqrt {3}
\]
Since the specific formula of new definite affine sphere is too long to place in this paper, we give the picture of it in Figure 4 with $\l = e^{\frac{2}{3} \pi i}$ instead.

 \begin{figure}[!htb]
\begin{center}
{
\includegraphics[width=0.5 \columnwidth]{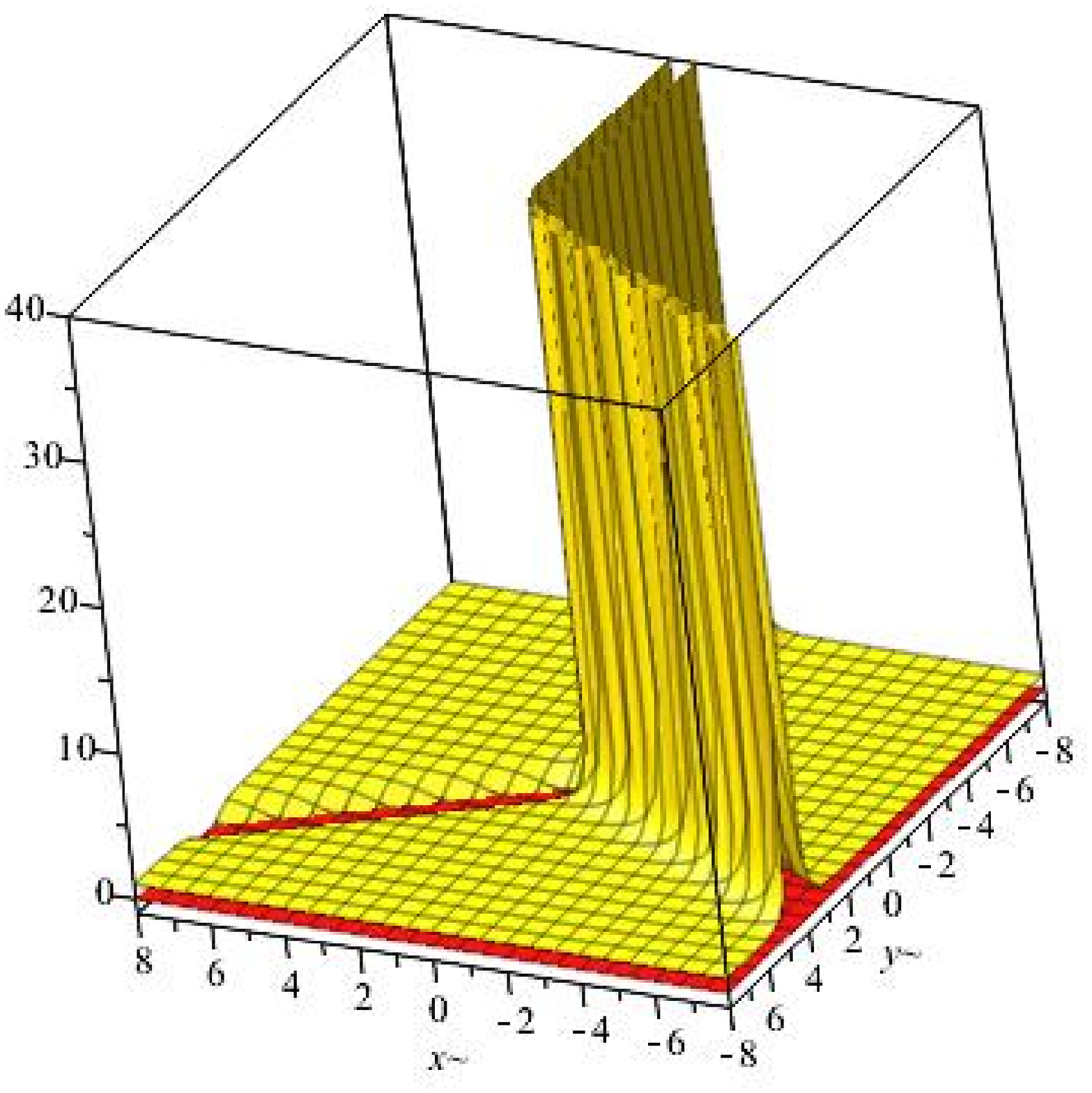}
}
\caption{$-8<x<8,-8<y<8$}
\end{center}
\end{figure}

\begin{figure}[!htb]
\begin{center}
{
\includegraphics[width=0.8 \columnwidth]{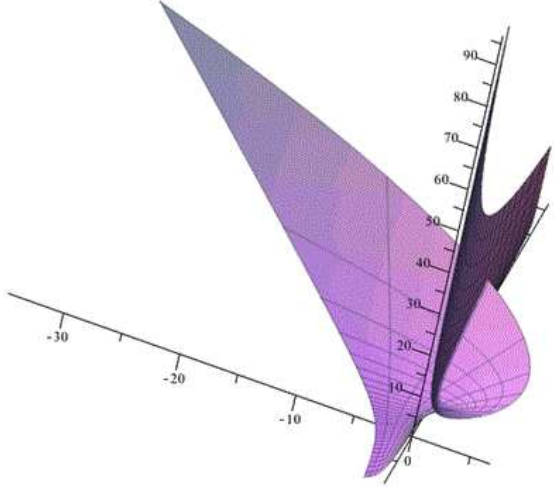}
}
\caption{$0<x<1.5,-0.8<y<3$}
\end{center}
\end{figure}

\rem By computation, we find out that when $\Re (b) = -\frac 12$ the simple element with 3 simple poles will give the one soliton type solution, and other cases it gives two soliton type solution.

 (2) Choose simple element
 $Ag_{\a,\ell_1}(\l)g_{\bar \a^{-1},\ell_2}(\l)$, where $\ell_{2}=\C(b_2,c_2,1)$, $\a \in \C^{\times}/S^1$ satisfy $\Psi_{\a,b_2,c_2}>0$ (see Remark \ref{r45}), and $E(z,\bar z,\l)$ is \eqref{6a1}. Then we have the following factorization:
 \begin{align*}
Ag_{\a,\ell_1}(\l)g_{\bar \a^{-1},\ell_2}(\l)E(\l)=\tilde{E}(\l)\tilde A g_{\a,\tilde \ell_1}(\l)g_{\bar \a^{-1},\tilde \ell_2}(\l),
\end{align*}
where
\[
\tilde l_2=l_2 E(\bar\a^{-1})=\C(\tilde b_2,\tilde c_2, 1),
\]
\begin{align} \label{6jm}
\bpm \tilde b_2\\ \tilde c_2\\ 1 \epm =\bpm \frac{(b_2+c_2+1)R(\bar\a^{-1})+(b_2+c_2\epsilon^{4}+\epsilon^{2})R(\epsilon^{2}\bar\a^{-1})+(b_2+c_2\epsilon^{2}+\epsilon^{4})R(\epsilon^{4}\bar\a^{-1})}{(b_2+c_2+1)R(\bar\a^{-1})+(b_2\epsilon^{4}+c_2\epsilon^{2}+1)R(\epsilon^{2}\bar\a^{-1})+(b_2\epsilon^{2}+c_2\epsilon^{4}+1)R(\epsilon^{4}\bar\a^{-1})}\\
\frac{(b_2+c_2+1)R(\bar\a^{-1})+(b_2\epsilon^{2}+c_2+\epsilon^{4})R(\epsilon^{2}\bar\a^{-1})+(b_2\epsilon^{4}+c_2+\epsilon^{2})R(\epsilon^{4}\bar\a^{-1})}{(b_2+c_2+1)R(\bar\a^{-1})+(b_2\epsilon^{4}+c_2\epsilon^{2}+1)R(\epsilon^{2}\bar\a^{-1})+(b_2\epsilon^{2}+c_2\epsilon^{4}+1)R(\epsilon^{4}\bar\a^{-1})}\\ 1 \epm ,
\end{align}
\[
\tilde A=diag(\tilde d,\tilde d^{-1},1)
\]
\[
\tilde d=\sqrt{(2\tilde{b}_1\tilde{c}_1-1)(2\tilde{b}_2\tilde{c}_2-1)}=\sqrt{\frac{\Psi_{\a,\tilde b_2,\tilde c_2}}{(-|\a|^4|\tilde b_2|^2 +|\a|^2 |\tilde c_2|^2 -\frac12  (1-|\a|^6))^2}}.
\]
 By Theorem \ref{t55}, we can claim that  $h = e^{\tilde{\psi}} = d^2$  is the new solution of Tzitz\'{e}ica equation \eqref{eqgc}.
 One can check this by Maple.

In particular, we choose $c_1=\frac{1}{2}+\frac{\sqrt{3} i}{2}, c_{2}=0,\a=-\frac{1}{2}i, d=1$. Then we can get the $\tau$-function of the new solution $h$:
\begin{align*}
\tau=&1+(\frac{45}{26}-\frac{55 \sqrt{3} i}{78}) e^{{\frac {15}{4}}y-\frac{5}{4}\sqrt {3}x+\frac{9}{4}ix+\frac{3}{4}\sqrt {3}iy}
+(\frac{45}{26}+\frac{55 \sqrt{3} i}{78}) e^{{\frac {15}{4}}y-\frac{5}{4}\sqrt {3}x-\frac{9}{4}ix-\frac{3}{4}\sqrt {3}iy} \\
&+e^{\frac{15}{2}y-\frac{5}{2}\sqrt {3}x}.
\end{align*}
It is coincide with the $\tau$-functions of 2-soliton solutions \eqref{6d} with parameters $k_1=-\frac{\sqrt{3}}{4}-\frac{3 i}{4}$, $k_2=-\sqrt{3}-3i$, $s_1=ln(\frac{135 - 55\sqrt{3} i}{78})$, $s_2=ln(\frac{135 + 55\sqrt{3} i}{78})$.
\begin{figure}[!htb]
\begin{center}
{
\includegraphics[width=0.45 \columnwidth]{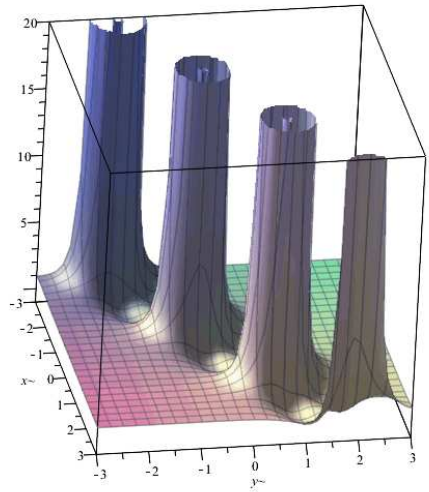}
}
\caption{$-3<x<3,-3<y<3,-1<z<20$}
\end{center}
\end{figure}

At last, we give the picture of this solution and the new definite affine sphere in Figure 5 and Figure 6 respectively.

\rem In particularly,  we choose $\ell = \mathbb{C} \cdot (1, 1, 1)$ and any $\a \in \mathbb{C}^{\times}/S^1$. From \eqref{6jm}, we can see that $\tilde{\ell}_1 = \tilde{\ell}_2 = \mathbb{C} \cdot (1, 1, 1)$, which means the dressing actions is trivial, and this special simple element is commutative with the special affine frames in this case.

\begin{figure}[!htb]
\begin{center}
{
\includegraphics[width=0.5 \columnwidth]{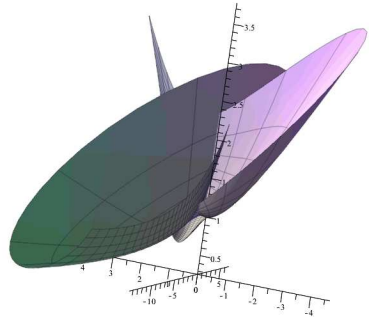}
}
\caption{$-0.4<x<0.8,-1.5<y<1.5$}
\end{center}
\end{figure}

\vfill
\newpage

\section*{Acknowledgments}

The authors would like to thank  Chuu-Lian Terng, Bo Dai and Zi-Xiang Zhou for the helpful discussions. The third author would like to express the deepest gratitude for the support of the Hong Kong University of Science and Technology during the project, especially Min Yan, Yong-Chang Zhu, Bei-Fang Chen and Guo-Wu Meng. The authors had also been supported by the NSF of China (Grant Nos. 10941002, 11001262), and the Starting Fund for Distinguished Young Scholars of Wuhan Institute of Physics and Mathematics (Grant No. O9S6031001).

\end{document}